\theoremstyle{plain}
\newtheorem{theorem}[subsection]{Theorem}
\newtheorem{proposition}[subsection]{Proposition}
\newtheorem{lemma}[subsection]{Lemma}
\newtheorem{corollary}[subsection]{Corollary}
\theoremstyle{definition}
\newtheorem{definition}[subsection]{Definition}
\newtheorem{example}[subsection]{Example}
\theoremstyle{remark}
\newtheorem{remark}[subsection]{Remark}
\numberwithin{equation}{section}
\newcommand{\cal}{\mathcal}
\newcommand{\cla}{{\cal A}}
\newcommand{\clb}{{\cal B}}
\newcommand{\clc}{{\cal C}}
\newcommand{\clg}{{\cal G}}
\newcommand{\clv}{{\cal V}}
\newcommand{\ot}{\otimes}
\newcommand{\raro}{\rightarrow}
\begin{document}

\title[]{Quantum isomorphism of 2-graphs}
\author[Joardar]{Soumalya Joardar}
\address{Department of Mathematics and Statistics, 
	Indian Institute of Science Education and Research Kolkata, 
	Mohanpur - 741246, West
	Bengal, India}
\email{soumalya@iiserkol.ac.in}
\author[Rahaman]{Atibur Rahaman}
\address{Department of Mathematics and Statistics, 
	Indian Institute of Science Education and Research Kolkata, 
	Mohanpur - 741246, West
	Bengal, India}
\email{atibur.pdf@iiserkol.ac.in}
\author[Sharma]{Jitender Sharma}
\address{Department of Mathematics and Statistics, 
	Indian Institute of Science Education and Research Kolkata, 
	Mohanpur - 741246, West
	Bengal, India}
\email{js20ip007@iiserkol.ac.in}

\begin{abstract} 
	We formulate a notion of the quantum automorphism group of a $2$-graph. 
	After some preliminary computations, we define quantum isomorphism between 
	a pair of $2$-graphs. We produce a `non-trivial' example of a pair of $2$-graphs 
	that are not quantum isomorphic to each other.   
\end{abstract}

\subjclass{46L67, 20B25}

\keywords{higher rank graphs, quantum automorphism groups, quantum isomorphism, \(C^*\)-algebra}

\maketitle

\section{Introduction} 
    Higher rank graphs (synonymously rank \(k\) graphs or \(k\)-graphs) are 
    natural generalization of graphs. Roughly speaking, a $k$-graph $\Lambda$ 
    is a countable small category equipped with a ``degree map" 
    $d:\Lambda\raro\mathbb{N}^{k}$ with the morphisms admitting 
    a unique factorization property. Since its inception, it has played a key role 
    in the realm of noncommutative geometry and topology. 
    In particular, it helps in the proper generalization of Cuntz-Krieger 
    $C^{\ast}$-algebras to higher-rank graph $C^{\ast}$-algebras. 
    There is a formidable literature on higher rank graphs and their 
    $C^{\ast}$-algebras 
    (see \cites{NYJM,Steger1,Steger2,Sims,Sims1,Sims2,Hazzlewood1} to name a few). 
    Pioneered by S. Wang (\cite{Wang}), the quantum symmetry of various finite dimensional 
    or infinite dimensional classical or quantum objects has been quite extensively studied 
    by many authors in recent years (e.g. \cites{Goswami,Banica,Hajac,Skalski,Asfaq,Voigt2,Vaes}). 
    Such quantum symmetry is now well established as the generalized symmetry 
    in the world of noncommutative geometry or topology. Among the finite structures, 
    the quantum automorphism group of finite graphs (see \cites{Schmidt,Fulton}), 
    at least for small number of vertices, are reasonably well understood by now. 
    Moreover, the notion of the quantum automorphism groups of graphs has found its 
    way into quantum information theory (see \cites{Qinfo,Qinfo2}) as well as some 
    concrete physical models involving graphs (see \cite{Potts}). 
    Therefore, higher rank graphs being a generalization of usual graphs, 
    it is quite natural to consider the notion of quantum automorphism groups 
    of higher rank graphs. 
    
    In this paper, we define the notion of ``the'' quantum 
    automorphism group of a rank $2$ graph. The notion is, in a sense, quite natural. 
    Then, with a growing list of literature in the intersection of quantum groups and 
    nonlocal games, we are naturally led to define quantum isomorphism between a 
    pair of $2$-graphs. We restrict our attention to the class of $2$-graphs as they 
    have an equivalent formulation in terms of a simpler combinatorial data 
    circumventing the category theoretic machinery. Briefly speaking, a $2$-graph 
    can be described by a triple consisting of two $1$-graphs on the same vertex 
    set having commuting vertex matrices and a bijection between certain sets. 
    We would like to mention that we restrict our attention to the case where the 
    common vertex set has finitely many vertices to be able to work in the category 
    of compact quantum groups. Then the quantum automorphism group of such 
    a triple is naturally defined. But one has to be careful as the isomorphism class 
    of $2$-graphs depend on a natural notion of equivalence between triples. 
    Then the quantum automorphism groups of such equivalent triples are isomorphic. 
    This allows us to call the quantum automorphism group of a 
    triple ``the'' quantum automorphism group of the corresponding $2$-graph. 
    
    Let us briefly mention the organization of the paper and some 
    of the main results obtained here. We start with a preliminary section 
    recalling the basics of higher rank (in particular rank $2$) graphs and 
    compact quantum groups. Then in the third section, 
    we define the notion of quantum automorphism group of a $2$-graph. 
    The underlying \(C^*\)-algebra of the quantum automorphism group is 
    described in terms of generators 
    and relations (see Equation \eqref{newrel} for a new kind of relations). 
    The next section (section 4) is devoted to some preliminary computations. 
    It is clear from the definition that in order to ensure that the quantum 
    automorphism group of a $2$-graph is genuine one has to at least start 
    with a triple which has two constituent graphs admitting genuine quantum 
    automorphisms. But that is naturally not sufficient though as seen in Example (b). 
    It is also clear from the definition that if one starts with two copies of the 
    same $1$-graph and the defining bijection as the trivial bijection, 
    the quantum automorphism group becomes the usual quantum automorphism 
    group of the constituent $1$-graph. Thus when one considers two copies of the same graph, 
    a non-trivial bijection becomes interesting. We exhibit one example where we 
    have two copies of the complete graph on $4$-vertices as the constituent graphs 
    of the triple, but there is a choice of a bijection so that the quantum automorphism 
    group of the corresponding triple is not only classical but also it is trivial. 
    In the last section, we define quantum isomorphism between a pair of $2$-graphs. 
    We show that the question whether a pair of $2$-graphs is quantum isomorphic 
    or not is intimately related to the quantum automorphism group of the 
    `disjoint union' of the pair (see Corollary \ref{qisocriterion}). 
    The results of the fifth section is a careful `extension' of the results already 
    explored in the literature (see for example \cite{Qinfo}) in the context of 
    usual $1$-graphs. Then we produce a `non-trivial' example of a pair 
    of $2$-graphs that are not quantum isomorphic (see Theorem \ref{main computation}). 
    This example of course is completely new. To produce such a pair we 
    consider a $2$-graph with `maximal' quantum symmetry and a $2$-graph 
    with `minimal' quantum symmetry. We end this paper by extending the free 
    wreath product result (\cite{bichon2004free}*{Theorem 4.2}) for $2$-graphs.

\section{Preliminaries}
\subsection{Higher rank graphs}
    Let us recall the definition of a higher rank graph briskly. 
    We shall be concerned about the case when $k=2$. 
    But for the sake of completeness we define a $k$-graph anyway. 
    For details, the reader is referred to \cite{NYJM}.
    
\begin{definition}[see~\cite{NYJM}*{Definition 1.1}]
  \label{defn_HR}
     A $k$-graph (rank $k$ graph or higher rank graph) $(\Lambda,d)$ 
     consists of a countable small category $\Lambda$ 
     (with target and source maps $t$ and $s$ respectively) 
     together with a functor $d:\Lambda\rightarrow\mathbb{N}^{k}$ 
     satisfying the factorization property: for every morphism 
     $\lambda\in \Lambda$ and $m,n\in\mathbb{N}^{k}$ with 
     $d(\lambda)=m+n,$ there are unique morphisms $\mu,\nu\in\Lambda$ 
     such that $\lambda=\mu\nu$ and $d(\mu)=m, \ d(\nu)=n.$ 
\end{definition}

\begin{remark}
    For $n\in\mathbb{N}^{k}$, one denotes the set of morphisms of degree $n$ 
    (i.e. $d^{-1}(n)$) by $\Lambda^{n}$. The unique factorization property enables 
    one to identify $\Lambda^{0}$ with ${\rm obj}(\Lambda)$. 
    We shall restrict ourselves to the case when $\Lambda^{0}$ is {\bf finite}.
\end{remark}

\begin{definition}[see~\cite{NYJM}*{Definition 1.9}]
     Let $f:\mathbb{N}^{l}\rightarrow\mathbb{N}^{k}$ be a monoid morphism. 
     If $(\Lambda,d)$ is a $k$-graph we may form the $l$-graph $f^{\ast}(\Lambda)$ 
     as follows:  $f^{\ast}(\Lambda)=\{(\lambda,n) : d(\lambda) = f(n)\}$ 
     with $d(\lambda,n)=n, 
     s(\lambda, n) = s(\lambda) \ {\rm and} \  t(\lambda, n) = t(\lambda)$.
\end{definition}

Now let us specialize to the case where $k=2$. 
We shall take an equivalent viewpoint of $2$-graphs which 
will be amenable to the study of quantum symmetry. This viewpoint is described 
in~\cite{NYJM}*{Section 6}. Here we shall give the bare bones. Roughly speaking, 
any $2$-graph can be constructed from two $1$-graphs with common set of 
vertices and commuting vertex matrices. Again we remind the reader that in our case, 
since ${\rm obj}(\Lambda)$ is finite, the number of common vertices is finite.

\medskip

\subsection*{\(2\)-graphs}
    Let $\mathcal{G}_{1}=(V,E_{1})$, $\mathcal{G}_{2}=(V,E_{2})$ 
    be two bidirected $1$-graphs with common set of vertices $V$ and 
    without multiple edges, loops or isolated vertices such that the 
    corresponding vertex matrices $\varepsilon_{1}, \varepsilon_{2}$ commute. 
    The commutation of $\varepsilon_{1}$ and $\varepsilon_{2}$ guarantees 
    the existence of a bijection $\theta:E_{1}\star E_{2}\raro E_{2}\star E_{1}$ where: 
    \begin{align*}
	E_{1}\star E_{2}
	    &=\left\{(e^{1}_{\alpha},e^{2}_{\beta})\in E_{1}\times E_{2}|s(e^{1}_{\alpha})
	      =t(e^{2}_{\beta})\right\}, \\ 
        E_{2}\star E_{1}
            &=\left\{(e^{2}_{\alpha},e^{1}_{\beta})\in E_{2}\times E_{1}|s(e^{2}_{\alpha})
              =t(e^{1}_{\beta})\right\}.
    \end{align*}
Given such a triple $(\mathcal{G}_{1},\mathcal{G}_{2},\theta)$, 
one can construct a $2$-graph $\Lambda$ in the following way: 
The set of objects ${\rm obj}(\Lambda)$ or $\Lambda^{0}$ is the set of common 
vertices of the graphs $\mathcal{G}_{1}, \mathcal{G}_{2}$. 
The morphisms of degree $(1,0)$ and $(0,1)$ are 
determined by the edge sets $E_{1},E_{2}$. 
The higher degree morphisms are built step by step using the bijection $\theta$. 
For details of the construction, we again refer the readers to~\cite{NYJM}*{Section 6}. 
Conversely, given a $2$-graph $\Lambda$, one can construct a triple. 
Roughly one takes the set $\Lambda^{(0,0)}$ to be the set of common vertices $V$. 
Then the morphisms $\Lambda^{(1,0)}$ and $\Lambda^{(0,1)}$ together with the 
target and source maps defines the constituent $1$-graphs $\mathcal{G}_{1}$ 
and $\mathcal{G}_{2}$. The bijection $\theta$ is then built out of the morphisms 
$\Lambda^{(1,1)}$ using the unique factorization property of the morphisms. 
A triple $(\mathcal{G}_{1},\mathcal{G}_{2},\theta)$ corresponding to 
a $2$-graph $\Lambda$ will be called the defining triple of $\Lambda$.

\begin{remark}
\((1)\) A bijection between the sets $E_{1}\star E_{2}$ and $E_{2}\star E_{1}$ 
is guaranteed precisely because the vertex matrices commute. Also, 
$\theta(e^{1}_{\alpha},e^{2}_{\beta})=(e^{2}_{\mu},e^{1}_{\nu})$ implies
\begin{equation*}
  \begin{matrix}
	 s(e^{1}_{\alpha})=t(e^{2}_{\beta}); & s(e^{2}_{\mu})=t(e^{1}_{\nu});\\
	 t(e^{1}_{\alpha})=t(e^{2}_{\mu}); & s(e^{2}_{\beta})=s(e^{1}_{\nu}).
   \end{matrix}
\end{equation*}
We shall refer to a $2$-graph by the triple $(\mathcal{G}_{1},\mathcal{G}_{2},\theta)$. 
Observe that given the bijection $\theta$, one has a natural vector space isomorphism 
$\theta_{\ast}:C(E_{1}\star E_{2})\raro C(E_{2}\star E_{1})$ given by 
$\theta_{\ast}(f)(e^{2}_{\mu},e^{1}_{\nu}):=
f(\theta^{-1}(e^{2}_{\mu},e^{1}_{\nu}))$ for $f\in C(E_{1}\star E_{2})$.

\medskip

\noindent \((2)\) We also mention that we restrict our attention to the $2$-graphs 
where the individual $1$-graphs are bidirected without 
{\bf multiple edges, loops or isolated vertices} for technical reasons. We assume that the $1$-graphs are without multiple edges and loops to consider ${\rm Aut}^{+}(\mathcal{G}_{1}), {\rm Aut}^{+}(\mathcal{G}_{2}) $ [See  Lemma \ref{Fulton_rel}]. The assumption that the graphs are bidirected and without isolated vertices will be used in the proof of Theorem~\ref{main theorem}    [See Section~\ref{main result}] and Lemma~\ref{well defined} [See Section \ref{wreath product}]. 
\end{remark}

\begin{remark}
  \label{pullback}
    A case of particular interest is when we have two copies of a $1$-graph 
    say $\mathcal{G}=(V,E)$ and the identity bijection $\theta$ from $E\star E$ to $E\star E$. 
    In that case the resulting $2$-graph is isomorphic to the $2$-graph 
    $f^{\ast}(\mathcal{G})$ where $f:\mathbb{N}^{2}\rightarrow\mathbb{N}$ 
    is the monoid morphism $f(m,n)=m+n$.
\end{remark}

Now let $(\mathcal{G}_{1}, \mathcal{G}_{2},\theta)$ and 
$(\mathcal{G}_{1}^{\prime},\mathcal{G}^{\prime}_{2},\theta^{\prime})$ be two triples. 
We denote the common vertex set of the triples by $V, V^{\prime}$ respectively; 
the vertex matrices of $\mathcal{G}_{l}$ by $\varepsilon_{l}$ and vertex matrices of 
$\mathcal{G}^{\prime}_{l}$ by $\varepsilon_{l}^{\prime}$ for $l=1,2$ respectively; 
the edge sets of $\mathcal{G}_{l}$ and $\mathcal{G}_{l}^{\prime}$ by $E_{l}$ and 
$E_{l}^{\prime}$ for $l=1,2$ respectively.

\begin{definition}
  \label{equivalent_triple}
    Two triples $(\mathcal{G}_{1}, \mathcal{G}_{2}, \theta)$ and 
    $(\mathcal{G}_{1}^{\prime},\mathcal{G}_{2}^{\prime},\theta^{\prime})$ 
    as above are said to be equivalent if the following holds:
    \begin{enumerate}[(i)]
        \item There is a bijection $T:V\rightarrow V^{\prime}$. 
        		Consequently, it induces a linear map $T_{\ast}:C(V^{\prime})\rightarrow C(V)$.
        \item If we denote the permutation matrix of the linear map 
        		$T_{\ast}$ again by $T$ with respect to standard bases of
		 $C(V)$ and $C(V^{\prime})$, then 
		 $T\varepsilon_{l}^{\prime}=\varepsilon_{l}T$ for $l=1,2$.
    	\item Consider the map 
		$T^{\ot 4}_{\ast}\equiv T_{\ast}\ot T_{\ast}\ot T_{\ast}\ot T_{\ast}: 
		C(V^{\prime}\times V^{\prime}\times V^{\prime}\times V^{\prime})
		\rightarrow 
		C(V\times V\times V\times V)$. 
		Thanks to condition (ii), $T^{\ot 4}_{\ast}$ is also a bijection between 
		both $C(E_{1}^{\prime}\star E_{2}^{\prime})$, $C(E_{1}\star E_{2})$ 
		and $C(E_{2}^{\prime}\star E_{1}^{\prime})$, $C(E_{2}\star E_{1})$. 
		We have the following commutative diagram:
    		\begin{center}
		    \begin{tikzcd}
			C(E_{1}^{\prime}\star E_{2}^{\prime}) \arrow{r}{T^{\ot 4}_{\ast}} 
			\arrow[swap]{d}{\theta_{\ast}^{\prime}} & C(E_{1}\star E_{2}) 
			\arrow{d}{\theta_{\ast}} \\
			C(E_{2}^{\prime}\star E_{1}^{\prime}) \arrow{r}{T^{\ot 4}_{\ast}} & C(E_{2}\star E_{1})
		    \end{tikzcd}
		\end{center}
    \end{enumerate}
\end{definition}

\begin{remark}
We abbreviate the last condition of the above definition as $\theta$ -compatibility. 
The isomorphism classes of $2$-graphs depend only on the equivalence classes 
of triples as defined above. We have included a brief sketch of the proof. 
We believe that it must be known. But since we could not find an explicit 
mention of this result in the literature, we have decided to keep the result 
in the Appendix (see Theorem \ref{appendix}).
\end{remark}

\subsection{Compact quantum groups and their actions} 
The term quantum group is quite general. We make it clear that we work 
with compact quantum groups (CQG in short) ala S.L. Woronowicz (\cite{Woro}). 
The literature on CQG and their actions on $C^{\ast}$-algebras are aplenty by now. 
The reader can refer to the articles \cites{DeCom,Soltan} for details on the 
actions of CQG's on $C^{\ast}$-algebras. Here we briefly recall what is relevant for us. 
The following are mainly taken from \cite{edin}.

\begin{definition}
A compact quantum group $\mathbb{G}$ is a pair $(C(\mathbb{G}), \Delta_{\mathbb{G}})$ 
such that $C(\mathbb{G})$ is a unital $C^{\ast}$-algebra and 
$\Delta_{\mathbb{G}}:C(\mathbb{G})\rightarrow C(\mathbb{G})\otimes C(\mathbb{G})$ 
is a unital $C^{\ast}$-
homomorphism satisfying
  \begin{enumerate}[(i)]
    \item $({\rm id}\otimes\Delta_{\mathbb{G}})\circ\Delta_{\mathbb{G}}
    		= (\Delta_{\mathbb{G}}\otimes {\rm id})\circ\Delta_{\mathbb{G}};$
    \item ${\rm Span}\{\Delta_{\mathbb{G}}(C(\mathbb{G}))(1\otimes C(\mathbb{G}))\}$ 
    		and ${\rm Span}\{\Delta_{\mathbb{G}}(C(\mathbb{G}))(C(\mathbb{G})\otimes 1)\}$ 
    		are dense in $C(\mathbb{G})\otimes C(\mathbb{G})$.
  \end{enumerate}
\end{definition}

\begin{remark} 
Given a CQG $\mathbb{G}$, there is a canonical dense Hopf $\ast$-algebra 
$C(\mathbb{G})_{0}$ in $C(\mathbb{G})$ on which
an antipode $\kappa$ and counit $\epsilon$ are defined. 
Given two CQG’s $\mathbb{G}_{1}$ and $\mathbb{G}_{2}$, a CQG morphism
between them is a unital $C^{\ast}$-homomorphism 
$\pi:C(\mathbb{G}_1)\rightarrow C(\mathbb{G}_2)$ such that 
$(\pi\otimes\pi)\circ\Delta_{\mathbb{G}_{1}} = \Delta_{\mathbb{G}_{2}}\circ\pi.$
\end{remark}

\begin{definition}
\label{faithfull action}
Given a (unital) $C^{\ast}$-algebra $\clc$, a CQG $(C(\mathbb{G}),\Delta_{\mathbb{G}})$ 
is said to act faithfully on $\clc$ if there is a unital $C^{\ast}$-homomorphism 
$\alpha:\clc\raro\clc\ot C(\mathbb{G})$ satisfying
  \begin{enumerate}[(i)]
    \item $(\alpha\ot {\rm id})\circ\alpha=({\rm id}\ot \Delta_{\mathbb{G}})\circ\alpha$;
    \item {\rm Span}$\{\alpha(\clc)(1\ot \Delta_{\mathbb{G}})\}$ 
    	is dense in $\clc\ot C(\mathbb{G})$;
    \item The $\ast$-algebra generated by the set  
    	$\{(\omega\ot{\rm id})\circ\alpha(\clc): \omega\in\clc^{\ast}\}$ 
	is norm-dense in $C(\mathbb{G})$.
  \end{enumerate}
\end{definition}

\begin{definition}[\cite{Bichon}*{Definition 2.1}]
    Given a unital $C^{\ast}$-algebra $\clc$, the quantum automorphism group of 
    $\clc$ is a CQG $(C(\mathbb{G}),\Delta_{\mathbb{G}})$ acting faithfully on 
    $\clc$ satisfying the following universal property:
    
    \begin{minipage}[h]{0.9\textwidth}
	 If $\mathbb{B}$ is any CQG acting faithfully on $\clc$ with the action $\beta$, 
	 there is a surjective CQG morphism $\pi:C(\mathbb{G})\raro C(\mathbb{B})$ 
	 such that $({\rm id}\ot \pi)\circ\alpha=\beta$. 
    \end{minipage}
\end{definition}

\begin{remark}
In general, the universal object may fail to exist in the above category. 
To ensure its existence, one typically assumes that the action preserves some 
fixed state on the $C^{\ast}$-algebra. However, for certain finite dimensional 
$C^{\ast}$-algebras, such a state preserving condition is automatic. 
We will not go into further details here as we will not be using it in this paper. 
For more details, the reader may consult \cite{Wang}.
\end{remark}

\begin{example}
  \label{S}
    If we take the space of $n$ points $X_{n}$ then the quantum automorphism 
    group of the $C^{\ast}$-algebra $C(X_{n})$ is given by the CQG 
    (denoted by $S_{n}^{+}$) whose underlying $C^{\ast}$-algebra is the 
    universal $C^{\ast}$ algebra generated by $\{q_{ij}\}_{i,j=1,\ldots,n}$ 
    satisfying the following relations (see~\cite{Wang}*{Theorem 3.1}):
	\begin{displaymath}
		q_{ij}^{2}=q_{ij},
		\quad 
		q_{ij}^{\ast}=q_{ij},
		\quad 
		\sum_{k=1}^{n}q_{ik}=\sum_{k=1}^{n}q_{ki}=1,
		\quad \text{ for } i,j=1,\ldots,n.
	\end{displaymath}
    Any $n\times n$ matrix $((q_{ij}))_{i,j=1,\ldots,n}$ with $q_{ij}$'s in a $C^{\ast}$-algebra 
    satisfying the above relations is called a {\bf magic} unitary. 
    The coproduct on the generators is given by $\Delta(q_{ij})=\sum_{k=1}^{n}q_{ik}\ot q_{kj}$. 
    It is worth mentioning that the CQG $S_{n}^{+}$ is a particular type of CQG 
    known as the compact matrix pseudogroup (see \cite{Woro}). 
    The $\ast$-algebra generated by $\{q_{ij}\}_{i,j=1,\ldots,n}$ is the canonical dense 
    Hopf $\ast$-algebra and the antipode \(\kappa\) 
    is given on the generators by $\kappa(q_{ij})=q_{ji}$.
\end{example}

\subsection{Quantum automorphism group of finite graphs}
Recall the definition of a finite, directed graph 
$\mathcal{G}=(V=\{v_{1},\ldots,v_{n}\}, E= \{e_{1},\ldots, e_{m}\})$ 
without multiple edges and the CQG $S_{n}^{+}$. 
We shall call a finite directed graph - a $1$-graph, 
to be consistent with higher rank graph terminology from now on.

\begin{definition}
  \label{qsymban}
The quantum automorphism group of a graph $\mathcal{G}$ 
without multiple edges (to be denoted by ${\rm Aut}^{+}(\mathcal{G})$) 
is defined to be the quotient $S^{+}_{n}/\langle A\varepsilon-\varepsilon A\rangle$, 
where $A=((q_{ij}))_{i,j=1,\ldots,n}$, and $\varepsilon$ is the vertex matrix of $\mathcal{G}$. 
The coproduct on the generators is  given by $\Delta(q_{ij})=\sum_{k=1}^{n}q_{ik}\ot q_{kj}$.
\end{definition}

For the classical automorphism group ${\rm Aut}(\mathcal{G})$, 
the commutative CQG $C({\rm Aut}(\mathcal{G}))$ is generated by 
$q_{ij}$'s where $q_{ij}$ is a function on $S_{n}$ taking value $1$ 
on the permutation which sends $i$-th vertex to $j$-th vertex and takes 
the value zero on other elements of the group. It is a quantum subgroup 
of ${\rm Aut}^{+}(\mathcal{G})$. The surjective CQG morphism 
$\pi:C({\rm Aut}^{+}(\mathcal{G}))\raro C({\rm Aut}(\mathcal{G}))$ 
sends the generators to generators. We recall the following

\begin{lemma}[\cite{Fulton}*{Lemma 3.1.1}]
  \label{Fulton_rel}
   The underlying $C^{\ast}$-algebra of the quantum automorphism group 
   ${\rm Aut}^{+}(\mathcal{G})$ of a finite graph $\mathcal{G}$ with $m$ edges 
   and $n$ vertices (without multiple edges) is the universal $C^{\ast}$-algebra 
   generated by $\{q_{ij}\}_{i,j=1,\ldots,n}$ satisfying the following relations:
   \begin{enumerate}
	\item \( q_{ij}^{\ast}=q_{ij},
		\quad 
		q_{ij}q_{ik}=\delta_{jk}q_{ij},
		\quad 
		q_{ji}q_{ki}=\delta_{jk}q_{ji}, 
		\text{ for } 1\leq i,j,k\leq n;\)
	\item \( \sum_{l=1}^{n}q_{il}=\sum_{l=1}^{n}q_{li}=1, 
		\text{ for } 1\leq i\leq n; \label{1}\)
	\item \(q_{s(e_{j})i}q_{t(e_{j})k}
		=q_{t(e_{j})k}q_{s(e_{j})i}
		=q_{is(e_{j})}q_{kt(e_{j})}
		=q_{kt(e_{j})}q_{is(e_{j})}
		=0, \) 
		whenever \( e_{j}\in E\) and \((i,k)\not\in E. \label{2}\)
\end{enumerate}
The coproduct on the generators is given by 
\(\Delta(q_{ij})=\sum_{k=1}^{n}q_{ik}\ot q_{kj}\). 
The \(C^{\ast}\)-action on the graph is given by 
\(\alpha(\delta_{i})=\sum_{j}\delta_{j}\ot q_{ji}\), 
where \(\delta_{i}\) is the function which takes 
value \(1\) on \(i\)-th vertex and zero elsewhere.
\end{lemma}

\section{Main Results} \label{main result}
 To define the quantum automorphism group of a \(2\)-graph, 
 we shall define the notion of the quantum automorphism group of a triple 
 \((\mathcal{G}_{1},\mathcal{G}_{2},\theta)\) first. 
 Then we shall show that the quantum automorphism groups of two 
 such equivalent triples are isomorphic. Then as the isomorphism class 
 of a $2$-graph depends on the equivalence class of its defining triple, 
 one can define the quantum automorphism group of a $2$-graph to be 
 the quantum automorphism group of its defining triple unambiguously. 
 To that end, let $(\mathcal{G}_{1},\mathcal{G}_{2},\theta)$ be a triple 
 where we remind the readers that $\mathcal{G}_{l}=(V,E_{l})$ are $1$-graphs 
 (bidirected and without any {\bf multiple edge, loops, or isolated vertices}) with the common 
 set of finitely many vertices $V$ and edge sets $E_{l}$'s for $l=1,2$ respectively; 
 Let the vertex matrices of $\mathcal{G}_{l}$ be $\varepsilon_{l}$ for $l=1,2$ 
 respectively such that $\varepsilon_{1}\varepsilon_{2}=\varepsilon_{2}\varepsilon_{1}$. 
 Given a CQG $\mathbb{G}$ having an action $\alpha:C(V)\raro C(V)\ot C(\mathbb{G})$ 
 given by $\alpha(\delta_{v_{i}})=\sum_{j}\delta_{v_{j}}\ot q_{ji}$ for $q_{ji}\in C(\mathbb{G})$ 
 such that $((q_{ij}))$ commutes with $\varepsilon_{1}$ and $\varepsilon_{2}$, 
 there are corepresentations $\beta_{l}$ on $C(E_{l})$ given by
\begin{displaymath}
	\beta_{l}(\delta_{e^{l}_{\alpha}})
	=\sum_{\beta}\delta_{e^{l}_{\beta}}
	\ot q_{t(e^{l}_{\beta})t(e^{l}_{\alpha})}
	q_{s(e^{l}_{\beta})s(e^{l}_{\alpha})},
\end{displaymath}
where \(e^{l}_{\alpha}\)'s and \(e^{l}_{\beta}\)'s are edges.
Then one can consider tensor product corepresentations 
$\beta_{1}\ot\beta_{2}$ and $\beta_{2}\ot\beta_{1}$ on 
$C(E_{1}\times E_{2})$ and $C(E_{2}\times E_{1})$ respectively. 
Now note that $C(E_{1}\star E_{2})$ is a linear subspace 
of $C(E_{1}\times E_{2})$ with a basis for $C(E_{1}\star E_{2})$ 
given by $\{\delta_{e^{1}_{\alpha}}\ot\delta_{e^{2}_{\beta}}:s(e^{1}_{\alpha})=t(e^{2}_{\beta})\}$. 
Similarly $C(E_{2}\star E_{1})$ is a linear subspace of $C(E_{2}\times E_{1})$.

\begin{lemma}
\label{invariant}
The corepresentation $\beta_{1}\ot\beta_{2}$ leaves the subspace $C(E_{1}\star E_{2})$ invariant.
\end{lemma}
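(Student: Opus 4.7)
The plan is to compute $(\beta_1\otimes\beta_2)(\delta_{e^1_\alpha}\otimes\delta_{e^2_\beta})$ for a basis element $(e^1_\alpha,e^2_\beta)\in E_1\star E_2$ and show that every term whose edge-pair $(e^1_\mu,e^2_\nu)$ does \emph{not} lie in $E_1\star E_2$ must vanish. The key input will be the magic unitary relations for $((q_{ij}))$, which hold because $\alpha$ is a $C^{\ast}$-action on $C(V)$.

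By definition of the tensor product corepresentation,
\begin{equation*}
(\beta_1\otimes\beta_2)(\delta_{e^1_\alpha}\otimes\delta_{e^2_\beta})
=\sum_{\mu,\nu}\delta_{e^1_\mu}\otimes\delta_{e^2_\nu}\otimes
q_{t(e^1_\mu)t(e^1_\alpha)}\,q_{s(e^1_\mu)s(e^1_\alpha)}\,q_{t(e^2_\nu)t(e^2_\beta)}\,q_{s(e^2_\nu)s(e^2_\beta)}.
\end{equation*}
The hypothesis $s(e^1_\alpha)=t(e^2_\beta)$ lets me rewrite the middle two factors as $q_{s(e^1_\mu)s(e^1_\alpha)}\,q_{t(e^2_\nu)s(e^1_\alpha)}$. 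These two magic unitary entries sit in the same column, so the column orthogonality relation $q_{ji}q_{ki}=\delta_{jk}q_{ji}$ collapses this product to $\delta_{s(e^1_\mu),t(e^2_\nu)}\,q_{s(e^1_\mu)s(e^1_\alpha)}$. Hence the whole coefficient is zero unless $s(e^1_\mu)=t(e^2_\nu)$, i.e.\ unless $(e^1_\mu,e^2_\nu)\in E_1\star E_2$, which is exactly what invariance of $C(E_1\star E_2)$ demands.

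The only step requiring any real care is the algebraic manipulation above, namely recognizing that after applying $s(e^1_\alpha)=t(e^2_\beta)$ the two middle factors share their second index and therefore become subject to the column relation from Theorem~\ref{Fulton_rel}. Everything else is purely a book-keeping matter. The magic unitary relations for $((q_{ij}))$ are available because $\alpha$ is a $*$-homomorphism on the commutative $C^{\ast}$-algebra $C(V)$ with orthogonal idempotent basis $\{\delta_{v_i}\}$, so I will invoke this (or Example~\ref{S}) as a standing fact rather than prove it inside the lemma. No use of the hypothesis $A_1A_2=A_2A_1$ is needed here; that hypothesis merely ensures the existence of $\theta$ and will enter only when $\theta$-equivariance is discussed later.
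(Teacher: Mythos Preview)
Your argument is correct and is essentially identical to the paper's proof: both compute $(\beta_{1}\ot\beta_{2})(\delta_{e^{1}_{\alpha}}\ot\delta_{e^{2}_{\beta}})$ and show that the coefficient in front of $\delta_{e^{1}_{\mu}}\ot\delta_{e^{2}_{\nu}}$ vanishes whenever $s(e^{1}_{\mu})\neq t(e^{2}_{\nu})$. You have simply made explicit the one step the paper leaves to the reader, namely that $s(e^{1}_{\alpha})=t(e^{2}_{\beta})$ puts the two middle factors in the same column so that the magic unitary relation $q_{ji}q_{ki}=\delta_{jk}q_{ji}$ kills the term.
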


\begin{proof}
	By definition of $\beta_{1}\ot\beta_{2}$, we have the following:
 	\begin{equation*}	
	(\beta_{1}\ot\beta_{2})(\delta_{e^{1}_{\alpha}}\ot\delta_{e^{2}_{\beta}})
	=\sum \delta_{e^{1}_{\mu}}\ot\delta_{e^{2}_{\nu}}
	   \ot q_{t(e^{1}_{\mu})t(e^{1}_{\alpha})}
	   q_{s(e^{1}_{\mu})s(e^{1}_{\alpha})}
	   q_{t(e^{2}_{\nu})t(e^{2}_{\beta})}
	   q_{s(e^{2}_{\nu})s(e^{2}_{\beta})}.
	\end{equation*}
Now suppose $s(e^{1}_{\alpha})=t(e^{2}_{\beta})$ so that 
$\delta_{e^{1}_{\alpha}}\ot\delta_{e^{2}_{\beta}}\in C(E_{1}\star E_{2})$. 
Then for $(e^{1}_{\mu},e^{2}_{\nu})$ such that $s(e^{1}_{\mu})\neq t(e^{2}_{\nu})$, 
we have 
$q_{t(e^{1}_{\mu})t(e^{1}_{\alpha})}
q_{s(e^{1}_{\mu})s(e^{1}_{\alpha})}
q_{t(e^{2}_{\nu})t(e^{2}_{\beta})}
q_{s(e^{2}_{\nu})s(e^{2}_{\beta})}=0$ which proves the lemma.
\end{proof} 

\begin{remark}
	(i) By the same reasoning one can show that $\beta_{2}\ot\beta_{1}$ 
	leaves $C(E_{2}\star E_{1})$ invariant. We denote the restriction of 
	$\beta_{1}\ot\beta_{2}$ on $C(E_{1}\star E_{2})$ by $\beta$ and 
	the restriction of $\beta_{2}\ot\beta_{1}$ on $C(E_{2}\star E_{1})$ 
	by $\beta^{{\rm op}}$.
	
        \noindent(ii) The corepresentations $\beta$, $\beta^{\rm op}$ are nothing 
        but the restrictions of the tensor product corepresentation 
        $\alpha^{\ot 4}:=\alpha\ot\alpha\ot\alpha\ot\alpha$ of 
        $\mathbb{G}$ on $C(V\times V\times V\times V)$ to the 
        subspaces $C(E_{1}\star E_{2})$ and $C(E_{2}\star E_{1})$ respectively.
\end{remark}
	
 Now recall the vector space isomorphism \(\theta_{\ast}\) corresponding to the bijection \(\theta\).
 
\begin{definition}
	\label{qaut} 
	A CQG $\mathbb{G}$ is said to act on a triple $(\mathcal{G}_{1},\mathcal{G}_{2},\theta)$, 
	if there is an action $\alpha:C(V)\raro C(V)\ot C(\mathbb{G})$ given by 
	$\alpha(\delta_{v_{i}})=\sum_j\delta_{v_{j}}\ot q_{ji}$ such that
	\begin{enumerate}[(i)]
	  \item The matrix $((q_{ij}))$ commutes with the vertex matrices of 
	          $\mathcal{G}_{1}$ and $\mathcal{G}_{2}$ so that we have the corresponding 
	          linear maps $\beta$ and $\beta^{{\rm op}}$ on $C(E_{1}\star E_{2})$ 
	          and $C(E_{2}\star E_{1})$ respectively.
          \item The following diagram commutes:
		\begin{center}
		   \begin{tikzcd}
			C(E_{1}\star E_{2}) \arrow{r}{\beta} \arrow[swap]{d}{\theta_{\ast}} 
			& C(E_{1}\star E_{2})\ot C(\mathbb{G}) \arrow{d}{\theta_{\ast}\ot {\rm id}} \\
			C(E_{2}\star E_{1}) \arrow{r}{\beta^{\rm op}} & C(E_{2}\star E_{1})\ot C(\mathbb{G})
		  \end{tikzcd}
		\end{center}
	\end{enumerate}
         The action is said to be faithful if $\alpha: C(V)\raro C(V)\ot C(\mathbb{G})$ is faithful in the sense of Definition \ref{faithfull action}.
\end{definition}

\subsection*{Generators and relations} 
Let $\mathbb{G}$ be a CQG acting on a triple $(\mathcal{G}_{1},\mathcal{G}_{2},\theta)$ 
so that (assume there are $n$-vertices) $\alpha:C(V)\raro C(V)\ot C(\mathbb{G})$ 
is a faithful action given by $\alpha(\delta_{v_{i}})=\sum_j\delta_{v_{j}}\ot q_{ji}$. 
Then $C(\mathbb{G})$ is generated by $\{q_{ij}\}_{i,j=1,\ldots,n}$. 
Now $q_{ij}$'s satisfy all the relations of ${\rm Aut}^{+}(\mathcal{G}_{l})$ for $l=1,2$ 
as described in Theorem \ref{Fulton_rel}. Let us figure out the relations imposed by 
the commutative diagram in the Definition \ref{qaut}. We start with the easy observation 
that if $\theta(e^{1}_{\alpha},e^{2}_{\beta})=(e^{2}_{\mu},e^{1}_{\nu})$, then 
$\theta_{\ast}(\delta_{e^{1}_{\alpha}}\ot \delta_{e^{2}_{\beta}})
=\delta_{e^{2}_{\mu}}\ot \delta_{e^{1}_{\nu}}$. 
Let $(e^{1}_{\alpha},e^{2}_{\beta})\in E_{1}\star E_{2}$ such that 
$\theta(e^{1}_{\alpha},e^{2}_{\beta})=(e^{2}_{\mu}, e^{1}_{\nu})$ 
for some $(e^{2}_{\mu},e^{1}_{\nu})\in E_{2}\star E_{1}$. Then \\
\begin{align}
	&\beta^{\rm op}\circ\theta_{\ast}(\delta_{e^{1}_{\alpha}}\ot\delta_{e^{2}_{\beta}})\nonumber\\
    	&=\beta^{\rm op}(\delta_{e^{2}_{\mu}}\ot\delta_{e^{1}_{\nu}})\nonumber\\
	&= \sum_{(e^{2}_{\mu^{\prime}},e^{1}_{\nu^{\prime}})\in E_{2}\star E_{1}}\delta_{e^{2}_{\mu^{\prime}}}
		\ot\delta_{e^{1}_{\nu^{\prime}}}
		\ot q_{t(e^{2}_{\mu^{\prime}})t(e^{2}_{\mu})}
		q_{s(e^{2}_{\mu^{\prime}})s(e^{2}_{\mu})}
		q_{t(e^{1}_{\nu^{\prime}})t(e^{1}_{\nu})}
		q_{s(e^{1}_{\nu^{\prime}})s(e^{1}_{\nu})}\nonumber\\
	&= \sum_{(e^{2}_{\mu^{\prime}},e^{1}_{\nu^{\prime}})\in E_{2}\star E_{1}}\delta_{e^{2}_{\mu^{\prime}}}
		\ot\delta_{e^{1}_{\nu^{\prime}}}
		\ot q_{t(e^{2}_{\mu^{\prime}})t(e^{2}_{\mu})}
		q_{s(e^{2}_{\mu^{\prime}})s(e^{2}_{\mu})}
		q_{s(e^{1}_{\nu^{\prime}})s(e^{1}_{\nu})}.
\end{align} 
On the other hand 
\begin{align*}
	&(\theta_{\ast}\ot{\rm id})\circ\beta(\delta_{e^{1}_{\alpha}}\ot\delta_{e^{2}_{\beta}})\\
	&= (\theta_{\ast}\ot{\rm id})
		\left(
		\sum_{(e^{1}_{\alpha^{\prime}},e^{2}_{\beta^{\prime}})\in E_{1}\star E_{2}} 
		\delta_{e^{1}_{\alpha^{\prime}}}
		\ot\delta_{e^{2}_{\beta^{\prime}}}
		\ot q_{t(e^{1}_{\alpha^{\prime}})t(e^{1}_{\alpha})}
		q_{s(e^{1}_{\alpha^{\prime}})s(e^{1}_{\alpha})}
		q_{t(e^{2}_{\beta^{\prime}})t(e^{2}_{\beta})}
		q_{s(e^{2}_{\beta^{\prime}})s(e^{2}_{\beta})}\right)
\end{align*}
Now let 
$\theta(e^{1}_{\alpha^{\prime}},e^{2}_{\beta^{\prime}})
=(e^{2}_{\mu^{\prime}},e^{1}_{\nu^{\prime}})$ 
and observe that as $\theta$ is a bijection, if 
$(e^{1}_{\alpha^{\prime}},e^{2}_{\beta^{\prime}})$ 
varies over all 
$E_{1}\star E_{2}$, $(e^{2}_{\mu^{\prime}},e^{1}_{\nu^{\prime}})$ 
varies over all of $E_{2}\star E_{1}$ so that 
\begin{align}
	&(\theta_{\ast}\ot{\rm id})
	\circ
	\beta(\delta_{e^{1}_{\alpha}}\ot\delta_{e^{2}_{\beta}}) \nonumber\\
	&= \sum_{(e^{2}_{\mu^{\prime}},e^{1}_{\nu^{\prime}})\in E_{2}\star E_{1}} 
	\delta_{e^{2}_{\mu^{\prime}}}
	\ot
	\delta_{e^{1}_{\nu^{\prime}}}
	\ot 
	q_{t(e^{1}_{\alpha^{\prime}})t(e^{1}_{\alpha})}
	q_{s(e^{1}_{\alpha^{\prime}})s(e^{1}_{\alpha})}
	q_{s(e^{2}_{\beta^{\prime}})s(e^{2}_{\beta})}.
\end{align} 
Comparing the coefficients, we get for all 
$(e^{1}_{\alpha},e^{2}_{\beta})\in E_{1}\star E_{2}$ 
and all 
$(e^{2}_{\mu^{\prime}},e^{1}_{\nu^{\prime}})\in E_{2}\star E_{1}$ 
with 
$\theta(e^{1}_{\alpha},e^{2}_{\beta})=(e^{2}_{\mu},e^{1}_{\nu})$ 
and 
$\theta(e^{1}_{\alpha^{\prime}},e^{2}_{\beta^{\prime}})
=(e^{2}_{\mu^{\prime}},e^{1}_{\nu^{\prime}})$,
\begin{equation}
	\label{newrel}
	q_{t(e^{2}_{\mu^{\prime}})t(e^{2}_{\mu})}
	q_{s(e^{2}_{\mu^{\prime}})s(e^{2}_{\mu})}
	q_{s(e^{1}_{\nu^{\prime}})s(e^{1}_{\nu})}
	=
	q_{t(e^{1}_{\alpha^{\prime}})t(e^{1}_{\alpha})}
	q_{s(e^{1}_{\alpha^{\prime}})s(e^{1}_{\alpha})}
	q_{s(e^{2}_{\beta^{\prime}})s(e^{2}_{\beta})}.
\end{equation}

\subsection*{Observation}
 Recall that 
 $\theta(e^{1}_{\alpha},e^{2}_{\beta})=(e^{2}_{\mu},e^{1}_{\nu})$ 
 and 
 $\theta(e^{1}_{\alpha^{\prime}},e^{2}_{\beta^{\prime}})
 =(e^{2}_{\mu^{\prime}},e^{1}_{\nu^{\prime}})$ 
 together imply 
 $t(e^{2}_{\mu^{\prime}})=t(e^{1}_{\alpha^{\prime}})$, 
 $t(e^{2}_{\mu})=t(e^{1}_{\alpha})$, 
 $s(e^{1}_{\nu^{\prime}})=s(e^{2}_{\beta^{\prime}})$ 
 and $s(e^{1}_{\nu})=s(e^{2}_{\beta})$. 
 Therefore the relations \eqref{newrel} have a few equivalent variations as 
 $q_{t(e^{2}_{\mu^{\prime}})t(e^{2}_{\mu})}
 =q_{t(e^{1}_{\alpha^{\prime}})t(e^{1}_{\alpha})}$ 
 and 
 $q_{s(e^{1}_{\nu^{\prime}})s(e^{1}_{\nu})}
 =q_{s(e^{2}_{\beta^{\prime}})s(e^{2}_{\beta})}$.
 
\begin{definition}
  \label{triple_def}
	The quantum automorphism group of a triple 
	$(\mathcal{G}_{1},\mathcal{G}_{2},\theta)$ 
	(to be denoted by ${\rm Aut}^{+}(\mathcal{G}_{1},\mathcal{G}_{2},\theta)$) 
	is defined to be the universal CQG in the category of CGQ's acting \emph{faithfully} on the 
	triple in the sense of Definition~\ref{qaut}.
\end{definition}

\begin{theorem}
  \label{main theorem}
    Let  $(\mathcal{G}_{1},\mathcal{G}_{2},\theta)$ 
    be a triple such that $\mathcal{G}_{1},\mathcal{G}_{2}$ 
    have a common set of vertices $V$ with $n$-points and 
    both the $1$-graphs are bidirected without any multiple edge, loops or isolated vertices. 
    Then ${\rm Aut}^{+}(\mathcal{G}_{1},\mathcal{G}_{2},\theta)$ exists. 
    The underlying 
    $C^{\ast}$-algebra of ${\rm Aut}^{+}(\mathcal{G}_{1},\mathcal{G}_{2},\theta)$ 
    is the universal $C^{\ast}$-algebra generated by $n^{2}$-elements $q_{ij}$'s 
    such that
	\begin{enumerate}[(i)]
	\item The elements $q_{ij}$'s satisfy the relations of 
		${\rm Aut}^{+}(\mathcal{G}_{l})$ for $l=1,2$.
	\item The elements $q_{ij}$'s satisfy the relations \eqref{newrel}.
		The coproduct is given on the generators by 
		$\Delta(q_{ij})=\sum_{k=1}^{n}q_{ik}\ot q_{kj}$. 
		If the usual basis for $C(V)$ is given by $\{\delta_{i}\}_{i=1}^{n}$, 
		the coaction on $C(V)$ is given by 
		$\alpha(\delta_{i})=\sum_{j=1}^{n}\delta_{j}\ot q_{ji}$ for $i=1,\ldots,n$.
	\end{enumerate}
\end{theorem}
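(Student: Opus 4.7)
The plan is to take the universal $C^{\ast}$-algebra $\mathcal{A}$ generated by $\{q_{ij}\}_{i,j=1}^{n}$ subject to the two families of relations and to show it carries a CQG structure that is universal among CQG's acting on the triple. Existence of $\mathcal{A}$ as a $C^{\ast}$-algebra is immediate because relation (i) forces each $q_{ij}$ to be a self-adjoint projection, so the universal $C^{\ast}$-norm is bounded by $1$. Define $\alpha: C(V)\to C(V)\ot\mathcal{A}$ by $\alpha(\delta_{i})=\sum_{j}\delta_{j}\ot q_{ji}$; the magic unitary relations make $\alpha$ a unital $\ast$-homomorphism, the commutation of $((q_{ij}))$ with $A_{1},A_{2}$ together with Lemma \ref{invariant} produce corepresentations $\beta,\beta^{\mathrm{op}}$ on $C(E_{1}\star E_{2})$ and $C(E_{2}\star E_{1})$, and relations (\ref{newrel}), by the very derivation preceding Definition \ref{triple_def}, are nothing but the encoding of the $\theta_{\ast}$-compatibility $(\theta_{\ast}\ot\mathrm{id})\circ\beta=\beta^{\mathrm{op}}\circ\theta_{\ast}$.

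The main task is to build a coproduct $\Delta:\mathcal{A}\to\mathcal{A}\ot\mathcal{A}$ with $\Delta(q_{ij})=Q_{ij}:=\sum_{k}q_{ik}\ot q_{kj}$. By universality of $\mathcal{A}$, this reduces to verifying that the matrix $((Q_{ij}))$ satisfies the defining relations. The magic unitary axioms and the commutation of $((Q_{ij}))$ with $A_{l}$ for $l=1,2$ follow by routine calculations identical to those used in Bichon's construction of the coproduct on ${\rm Aut}^{+}(\mathcal{G})$. The hard part is checking (\ref{newrel}) for $Q_{ij}$: a direct unfolding produces unwieldy products of six generators, so the cleanest argument is conceptual. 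Observe that $(\alpha\ot\mathrm{id})\circ\alpha$ sends $\delta_{i}$ to $\sum_{j}\delta_{j}\ot Q_{ji}$, and its fourth tensor power, restricted to $C(E_{1}\star E_{2})$ and to $C(E_{2}\star E_{1})$, equals the iterated corepresentations $(\beta\ot\mathrm{id})\circ\beta$ and $(\beta^{\mathrm{op}}\ot\mathrm{id})\circ\beta^{\mathrm{op}}$ respectively (this uses the invariance of the subspaces twice). Since $\theta_{\ast}$ already intertwines the single-step corepresentations, iterating gives $(\theta_{\ast}\ot\mathrm{id}\ot\mathrm{id})\circ(\beta\ot\mathrm{id})\circ\beta=(\beta^{\mathrm{op}}\ot\mathrm{id})\circ\beta^{\mathrm{op}}\circ\theta_{\ast}$, which expanded on basis vectors is exactly relation (\ref{newrel}) with each $q_{ij}$ replaced by $Q_{ij}$.

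Once $\Delta$ exists, coassociativity is verified on generators (both sides of $(\Delta\ot\mathrm{id})\Delta(q_{ij})=(\mathrm{id}\ot\Delta)\Delta(q_{ij})$ yield $\sum_{k,l}q_{ik}\ot q_{kl}\ot q_{lj}$), and the density conditions follow automatically from the Woronowicz framework of compact matrix pseudogroups, since $\mathcal{A}$ is generated by the entries of the unitary matrix $((q_{ij}))$ together with its transpose. The faithful action of $(\mathcal{A},\Delta)$ on the triple via $\alpha$ satisfies the requirements of Definition \ref{qaut} by construction.

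Finally, for universality: if a CQG $\mathbb{B}$ acts on $(\mathcal{G}_{1},\mathcal{G}_{2},\theta)$ via $\beta'(\delta_{i})=\sum_{j}\delta_{j}\ot p_{ji}$, then Theorem \ref{Fulton_rel} yields relation (i) for the $p_{ij}$'s, and the derivation preceding Definition \ref{triple_def} yields relation (ii) for the $p_{ij}$'s. Universality of $\mathcal{A}$ then provides a unique $\ast$-homomorphism $\pi:\mathcal{A}\to C(\mathbb{B})$ with $\pi(q_{ij})=p_{ij}$; it is surjective because $\{p_{ij}\}$ generate $C(\mathbb{B})$, and intertwines the coproducts since both are given by the same formula on generators. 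The principal obstacle throughout is the preservation of (\ref{newrel}) under $\Delta$, which I would handle by the intertwining argument above rather than by brute symbolic manipulation.
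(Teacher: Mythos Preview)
Your proposal is correct and takes a genuinely different route from the paper. The paper verifies that $\Delta(q_{ij}):=Q_{ij}=\sum_{k}q_{ik}\ot q_{kj}$ preserves relation~(\ref{newrel}) by a direct symbolic computation: it expands the $Q$-monomials on both sides as sums over $V^{3}$, then uses the Fulton relations (together with the no-multiple-edges and no-source/sink hypotheses) to collapse the free sum to a sum over $E_{2}\star E_{1}$, and finally applies~(\ref{newrel}) for $q$ termwise to pass to the other side. Your argument replaces this reduction by a structural observation: writing $\gamma=(\alpha\ot\mathrm{id})\circ\alpha$, the identity $\gamma^{\ot 4}=(\alpha^{\ot 4}\ot\mathrm{id})\circ\alpha^{\ot 4}$ restricts on $C(E_{1}\star E_{2})$ to $(\beta\ot\mathrm{id})\circ\beta$ (using Lemma~\ref{invariant} twice), and the single-step intertwining $(\theta_{\ast}\ot\mathrm{id})\circ\beta=\beta^{\mathrm{op}}\circ\theta_{\ast}$ iterates formally to give the intertwining for $\gamma$, which is precisely~(\ref{newrel}) for $Q$. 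This is cleaner and makes transparent that the compatibility of $\Delta$ with~(\ref{newrel}) is a purely categorical fact about tensor products of intertwined corepresentations; the paper's approach, by contrast, makes the role of the individual graph relations explicit at each step. One small point worth making precise in your write-up: the equality $\gamma^{\ot 4}\big|_{C(E_{1}\star E_{2})}=(\beta\ot\mathrm{id})\circ\beta$ uses that the matrix coefficients of the tensor-product corepresentation multiply, so that the $Q$-monomial $Q_{i_{1}j_{1}}\cdots Q_{i_{4}j_{4}}$ equals $\sum_{k_{1},\ldots,k_{4}}q_{i_{1}k_{1}}\cdots q_{i_{4}k_{4}}\ot q_{k_{1}j_{1}}\cdots q_{k_{4}j_{4}}$; this is elementary but is the hinge on which your whole argument turns.
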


\begin{proof}
	Note that it suffices to show that the ideal generated by the 
	relations~\eqref{newrel} is a Woronowicz ideal i.e. for all 
	$(e^{1}_{\alpha},e^{2}_{\beta})\in E_{1}\star E_{2}$ 
	and all 
	$(e^{2}_{\mu^{\prime}},e^{1}_{\nu^{\prime}})\in E_{2}\star E_{1}$ 
	with 
	$\theta(e^{1}_{\alpha},e^{2}_{\beta})=(e^{2}_{\mu},e^{1}_{\nu})$ 
	and 
	$\theta(e^{1}_{\alpha^{\prime}},e^{2}_{\beta^{\prime}})
	=(e^{2}_{\mu^{\prime}},e^{1}_{\nu^{\prime}})$,
	\begin{equation*}
	    \Delta
	    \left(
	    q_{t(e^{2}_{\mu^{\prime}})t(e^{2}_{\mu})}
	    q_{s(e^{2}_{\mu^{\prime}})s(e^{2}_{\mu})}
	    q_{s(e^{1}_{\nu^{\prime}})s(e^{1}_{\nu})}
	    \right)
	    =
	    \Delta
	    \left(
	    q_{t(e^{1}_{\alpha^{\prime}})t(e^{1}_{\alpha})}
	    q_{s(e^{1}_{\alpha^{\prime}})s(e^{1}_{\alpha})}
	    q_{s(e^{2}_{\beta^{\prime}})s(e^{2}_{\beta})}
	    \right).
    \end{equation*}
    The left hand side is equal to 
     \begin{displaymath}
	\sum_{i,j,k\in V}q_{t(e^{2}_{\mu^{\prime}})i}
	q_{s(e^{2}_{\mu^{\prime}})j}
	q_{s(e^{1}_{\nu^{\prime}})k}
	\ot 
	q_{it(e^{2}_{\mu})}
	q_{js(e^{2}_{\mu})}
	q_{ks(e^{1}_{\nu})}.
    \end{displaymath}
    Now for $(j,i)\notin E_{2}$, 
    $q_{t(e^{2}_{\mu^{\prime}})i}q_{s(e^{2}_{\mu^{\prime}})j}=0$. 
    As none of the graphs $\mathcal{G}_{l}$ admits multiple edges, 
    the last summation reduces to
    \begin{displaymath}
	\sum_{f^{2}_{\mu^{\prime\prime}}\in E_{2},k\in V} 
	q_{t(e^{2}_{\mu^{\prime}})t(f^{2}_{\mu^{\prime\prime}})}
	q_{s(e^{2}_{\mu^{\prime}})s(f^{2}_{\mu^{\prime\prime}})}
	q_{s(e^{1}_{\nu^{\prime}})k}
	\ot 
	q_{t(f^{2}_{\mu^{\prime\prime}})t(e^{2}_{\mu})}
	q_{s(f^{2}_{\mu^{\prime\prime}})s(e^{2}_{\mu})}
	q_{ks(e^{1}_{\nu})}.
    \end{displaymath}
    Since $\mathcal{G}_{1}$ is bidirected and without isolated vertices, 
    there is an edge $f^{1}_{\alpha^{\prime\prime}}\in E_{1}$ 
    such that $s(f^{2}_{\mu^{\prime\prime}})=t(f^{1}_{\alpha^{\prime\prime}})$. 
    In particular this implies that 
    $(f^{2}_{\mu^{\prime\prime}},f^{1}_{\alpha^{\prime\prime}})\in E_{2}\star E_{1}$. 
    Replacing $s(e^{2}_{\mu^{\prime}})$ by $t(e^{1}_{\nu^{\prime}})$ 
    and using the fact that 
    $q_{t(e^{1}_{\nu^{\prime}})t(f^{1}_{\alpha^{\prime\prime}})}
    q_{s(e^{1}_{\nu^{\prime}})k}=0$ for $k\neq s(f^{1}_{\alpha^{\prime\prime}})$, 
    the last summation reduces to
    \begin{equation*}
	\qquad
	\sum_{\mathclap{(f^{2}_{\mu^{\prime\prime}},f^{1}_{\alpha^{\prime\prime}})\in E_{2}\star E_{1}}} 
	q_{t(e^{2}_{\mu^{\prime}})t(f^{2}_{\mu^{\prime\prime}})}
	q_{t(e^{1}_{\nu^{\prime}})t(f^{1}_{\alpha^{\prime\prime}})}
	q_{s(e^{1}_{\nu^{\prime}})s(f^{1}_{\alpha^{\prime\prime}})}
	\ot
	q_{t(f^{2}_{\mu^{\prime\prime}})t(e^{2}_{\mu})}
	q_{t(f^{1}_{\alpha^{\prime\prime}})s(e^{2}_{\mu})}
	q_{s(f^{1}_{\alpha^{\prime\prime}})s(e^{1}_{\nu})}
    \end{equation*}
    which is equal to
    \begin{equation*}
      \qquad
      \sum_{\mathclap{(f^{2}_{\mu^{\prime\prime}},f^{1}_{\alpha^{\prime\prime}})\in E_{2}\star E_{1}}} 
      q_{t(e^{2}_{\mu^{\prime}})t(f^{2}_{\mu^{\prime\prime}})}
      q_{s(e^{2}_{\mu^{\prime}})s(f^{2}_{\mu^{\prime\prime}})}
      q_{s(e^{1}_{\nu^{\prime}})s(f^{1}_{\alpha^{\prime\prime}})}
      \ot 
      q_{t(f^{2}_{\mu^{\prime\prime}})t(e^{2}_{\mu})}
      q_{s(f^{2}_{\mu^{\prime\prime}})s(e^{2}_{\mu})}
      q_{s(f^{1}_{\alpha^{\prime\prime}})s(e^{1}_{\nu})}.
    \end{equation*}
    Let 
    $(f^{1}_{\gamma^{\prime\prime}}, f^{2}_{\delta^{\prime\prime}})\in E_{1}\star E_{2}$ 
    be such that 
    $\theta(f^{1}_{\gamma^{\prime\prime}}, f^{2}_{\delta^{\prime\prime}})
    =(f^{2}_{\mu^{\prime\prime}},f^{1}_{\alpha^{\prime\prime}})$. 
    Then $\theta$ being a bijection, 
    $(f^{1}_{\gamma^{\prime\prime}}, f^{2}_{\delta^{\prime\prime}})$ 
    vary over all of $E_{1}\star E_{2}$ as 
    ($f^{2}_{\mu^{\prime\prime}},f^{1}_{\alpha^{\prime\prime}}$) 
    vary over all of $E_{2}\star E_{1}$. 
    Therefore using the relations \eqref{newrel}, 
    the last summation becomes
    \begin{equation*}
	\quad\qquad
	\sum_{\mathclap{(f^{1}_{\gamma^{\prime\prime}},f^{2}_{\delta^{\prime\prime}})\in E_{1}\star E_{2}}} 
	q_{t(e^{1}_{\alpha^{\prime}})t(f^{1}_{\gamma^{\prime\prime}})}
	q_{s(e^{1}_{\alpha^{\prime}})s(f^{1}_{\gamma^{\prime\prime}})}
	q_{s(e^{2}_{\beta^{\prime}})s(f^{2}_{\delta^{\prime\prime}})}
	\ot 
	q_{t(f^{1}_{\gamma^{\prime\prime}})t(e^{1}_{\alpha})}
	q_{s(f^{1}_{\gamma^{\prime\prime}})s(e^{1}_{\alpha})}
	q_{s(f^{2}_{\delta^{\prime\prime}})s(e^{2}_{\beta^{\prime}})},
    \end{equation*} 
    which can be shown to be equal to 
    $\Delta
    \left(
    q_{t(e^{1}_{\alpha^{\prime}})t(e^{1}_{\alpha})}
    q_{s(e^{1}_{\alpha^{\prime}})s(e^{1}_{\alpha})}
    q_{s(e^{2}_{\beta^{\prime}})s(e^{2}_{\beta})}
    \right)$.
\end{proof}

The next theorem will allow us to define the notion of the 
quantum automorphism group of a $2$-graph. 
The proof is more or less standard and hence omitted. 

\begin{theorem}
  \label{equivalence}
    Let $(\mathcal{G}_{1},\mathcal{G}_{2},\theta)$ 
    and 
    $(\mathcal{G}_{1}^{\prime},\mathcal{G}_{2}^{\prime},\theta^{\prime})$ 
    be two equivalent triples in the sense of Definition \ref{equivalent_triple}. Then 
    \begin{displaymath}
        {\rm Aut}^{+}(\mathcal{G}_{1},\mathcal{G}_{2},\theta)
        \cong 
        {\rm Aut}^{+}(\mathcal{G}_{1}^{\prime},\mathcal{G}_{2}^{\prime},\theta^{\prime}).
    \end{displaymath}
    In fact, ${\rm Aut}^{+}(\mathcal{G}_{1},\mathcal{G}_{2},\theta)$ 
    can be obtained from 
    ${\rm Aut}^{+}(\mathcal{G}_{1}^{\prime},\mathcal{G}_{2}^{\prime},\theta^{\prime})$ 
    by a similarity transformation.
\end{theorem}

Now as the isomorphism class of a $2$-graph depends on 
the equivalence class of its defining triple (see the Appendix), 
the above theorem allows us to define the following. 

\begin{definition}
     Let $\Lambda$ be a $2$-graph with the defining triple 
     $(\mathcal{G}_{1},\mathcal{G}_{2},\theta)$. 
     Then the quantum automorphism group of $\Lambda$ 
     is defined to be the quantum automorphism group of 
     $(\mathcal{G}_{1},\mathcal{G}_{2},\theta)$ 
     as defined in Definition~\ref{triple_def}. 
     The quantum automorphism group will be denoted by either of the 
     notations ${\rm Aut}^{+}(\Lambda)$ or 
     ${\rm Aut}^{+}(\mathcal{G}_{1},\mathcal{G}_{2},\theta)$.  
\end{definition}

\section{Examples and Computations I}
  \label{sec:computation}

\subsection*{(a) Two copies of the same graph with trivial bijection} 
Recall the pullback of a $1$-graph $\mathcal{G}$ via the monoid morphism 
$f:\mathbb{N}^{2}\raro\mathbb{N}$ given by $f(m,n)=m+n$. 
The pullback $f^{\ast}(\mathcal{G})$ is the $2$-graph with the defining triple 
$(\mathcal{G}_{1},\mathcal{G}_{2}, \theta)$, where $\mathcal{G}_{l}$'s 
are two copies of the same graph $\mathcal{G}$ and the bijection 
$\theta$ between the composable pairs is the trivial bijection. 
Since $\theta$ is the trivial bijection, replacing 
$(e^{2}_{\mu}, e^{1}_{\nu})$ by 
$(e^{1}_{\alpha},e^{2}_{\beta})$ and 
$(e^{2}_{\mu^{\prime}}, e^{1}_{\nu^{\prime}})$ by 
$(e^{1}_{\alpha^{\prime}}, e^{2}_{\beta^{\prime}})$ in~\eqref{newrel}, 
we see that \eqref{newrel} are redundant. 
Also since the individual $1$-graphs are two copies of the same graph, 
the following is immediate:
\begin{displaymath}
    {\rm Aut}^{+}(f^{\ast}(\mathcal{G}))\cong {\rm Aut}^{+}(\mathcal{G}).
\end{displaymath}
Thus we see that the quantum automorphism group of any 
$1$-graph can be realized as the quantum automorphism group of a 
$2$-graph $\Lambda$. Consequently we always have examples of 
$2$-graphs admitting genuine quantum automorphism groups.

\subsection*{(b) Two different graphs with a nontrivial bijection}
We take the following \(1\)-graphs \(\mathcal{G}_1\) and 
\(\mathcal{G}_2\) as the constituent graphs to construct the 
\(2\)-graph whose \(1\)-skeleton is given as follows:
\begin{figure}[h]
  \begin{minipage}[h]{0.7\textwidth}
    \begin{tabular}{@{}cc|c@{}}
      \begin{tikzpicture}
        \Vertex[label=$1$,position=above,shape=circle,size=0.05,color=black]{1}
  	\Vertex[x=1.5,label=$2$,position=above,shape=circle, size=0.05,color=black]{2}
  	\Vertex[,y=-1.5,label=$4$,position=below,shape=circle, size=0.05,color=black]{4}
  	\Vertex[x=1.5,y=-1.5,label=$3$,position=below,shape=circle, size=0.05,color=black]{3}

  	\Edge[color=red,bend=30,label=$e_1$,position=above,Direct](1)(2);
  	\Edge[color=red,bend=30,label=$e_2$,position=below,Direct](2)(1);
  	\Edge[color=red,bend=30,label=$e_4$,position=below,Direct](3)(4);
  	\Edge[color=red,bend=30,label=$e_3$,position=above,Direct](4)(3);
      \end{tikzpicture}
&
      \begin{tikzpicture}
  	\Vertex[x=2.5,label=$1$,position=above,shape=circle, size=0.05,color=black]{5}
   	\Vertex[x=4,label=$2$,position=above,shape=circle, size=0.25,color=black]{6}
   	\Vertex[x=2.5, y=-1.5,label=$4$,position=below,shape=circle, size=0.25,color=black]{7}
   	\Vertex[x=4,y=-1.5,label=$3$,position=below,shape=circle, size=0.25,color=black]{8}

   	\Edge[color=blue,bend=30,label=$f_2$,position=right,Direct](5)(7);
   	\Edge[color=blue,bend=30,label=$f_1$,position=left,Direct](7)(5);
   	\Edge[color=blue,bend=30,label=$f_4$,position=right,Direct](6)(8);
   	\Edge[color=blue,bend=30,label=$f_3$,position=left,Direct](8)(6);
      \end{tikzpicture}
&
      \begin{tikzpicture}
	\Vertex[label=$1$,position=above,shape=circle,size=0.05,color=black]{1}
  	\Vertex[x=1.5,label=$2$,position=above,shape=circle, size=0.05,color=black]{2}
  	\Vertex[,y=-1.5,label=$4$,position=below,shape=circle, size=0.05,color=black]{4}
  	\Vertex[x=1.5,y=-1.5,label=$3$,position=below,shape=circle, size=0.05,color=black]{3}

  	\Edge[color=red,bend=30,label=$e_1$,position=above,Direct](1)(2);
  	\Edge[color=red,bend=30,label=$e_2$,position=below,Direct](2)(1);
  	\Edge[color=red,bend=30,label=$e_4$,position=below,Direct](3)(4);
  	\Edge[color=red,bend=30,label=$e_3$,position=above,Direct](4)(3);
  	\Edge[color=blue,bend=30,label=$f_2$,position=right,Direct](1)(4);
   	\Edge[color=blue,bend=30,label=$f_1$,position=left,Direct](4)(1);
   	\Edge[color=blue,bend=30,label=$f_4$,position=right,Direct](2)(3);
   	\Edge[color=blue,bend=30,label=$f_3$,position=left,Direct](3)(2);
      \end{tikzpicture}
\\
	Graph \(\mathcal{G}_1\) & Graph \(\mathcal{G}_2\) & 1-Skeleton
    \end{tabular}
  \end{minipage}
\end{figure}

Denote the edge sets of $\mathcal{G}_{1}, \mathcal{G}_{2}$ by 
$E_{1}, E_{2}$ respectively. Then the composable pairs are given by 
\begin{align*}
    E_1\star E_2 &=\{(e_1,f_1),(e_2,f_3),(e_3,f_2),(e_4,f_4)\},\\
    E_2\star E_1 &=\{(f_1,e_4),(f_2,e_2),(f_3,e_3),(f_4,e_1)\}.
\end{align*}
We take the following choice of $\theta$: 
\begin{align*}
	&(e_1,f_1)\mapsto(f_3,e_3) \\
	&(e_2,f_3)\mapsto(f_1,e_4) \\
	&(e_3,f_2)\mapsto(f_4,e_1) \\
	&(e_4,f_4)\mapsto(f_2,e_2) 
\end{align*}
In fact, the reader can verify that in this example there is a 
unique choice of $\theta$ given as above. 
With the usual notations, the action $\alpha$ of 
${\rm Aut}^{+}(\mathcal{G}_{1},\mathcal{G}_{2},\theta)$ on $C(V)$ 
is given by $\alpha(\delta_{i})=\sum_{j=1}^{4}\delta_{j}\ot q_{ji}$.  
A simple calculation reveals that the commutation of the unitary 
$((q_{ij}))$ with the vertex matrices impose the following 
relations among the generators $q_{ij}$'s: 
\begin{eqnarray}
	\label{exrel1}
	&& q_{11}=q_{22}=q_{33}=q_{44}; \ q_{12}=q_{21}=q_{34}=q_{43};\\
	\label{exrel2}
	&& q_{14}=q_{23}=q_{32}=q_{41}; \ q_{13}=q_{31}=q_{24}=q_{42}.
\end{eqnarray}
We claim that the relations~\eqref{newrel} are redundant for the graph 
$(\mathcal{G}_{1},\mathcal{G}_{2},\theta)$. 
We shall prove a somewhat stronger result: 
For any choice of edges from $\mathcal{G}_{2}$, 
the first two terms of the LHS of~\eqref{newrel} are equal i.e. 
$q_{t(f_{i})t(f_{j})}q_{s(f_{i})s(f_{j})}= q_{t(f_{i})t(f_{j})}$ for all $i,j=1,2,3,4$. 
This can be checked by direct calculation using the relations~\eqref{exrel1} and~\eqref{exrel2}. 
For example: 
\begin{eqnarray*}
	&& q_{t(f_{1})t(f_{2})}q_{s(f_{1})s(f_{2})}=q_{14}q_{41}=q_{14};\\
	&& q_{t(f_{1})t(f_{1})}q_{s(f_{1})s(f_{1})}=q_{11}q_{44}=q_{44}.
\end{eqnarray*} 
Similarly, one can prove that for $i,j=1,2,3,4$, 
$q_{t(e_{i})t(e_{j})}q_{s(e_{i})s(e_{j})}=q_{t(e_{i})t(e_{j})}$ 
so that, in particular, for all 
$(e^{1}_{\alpha},e^{2}_{\beta})\in E_{1}\star E_{2}$ 
and all 
$(e^{2}_{\mu^{\prime}},e^{1}_{\nu^{\prime}})\in E_{2}\star E_{1}$ 
with 
$\theta(e^{1}_{\alpha},e^{2}_{\beta})=(e^{2}_{\mu},e^{1}_{\nu})$ 
and 
$\theta(e^{1}_{\alpha^{\prime}},e^{2}_{\beta^{\prime}})
=(e^{2}_{\mu^{\prime}},e^{1}_{\nu^{\prime}})$,
\begin{align*}
	&q_{t(e^{2}_{\mu^{\prime}})t(e^{2}_{\mu})}
	  q_{s(e^{2}_{\mu^{\prime}})s(e^{2}_{\mu})}
	  q_{s(e^{1}_{\nu^{\prime}})s(e^{1}_{\nu})}\\
	&\quad= 
	  q_{t(e^{2}_{\mu^{\prime}})t(e^{2}_{\mu})}
	  q_{s(e^{1}_{\nu^{\prime}})s(e^{1}_{\nu})}\\
	&\quad= 
	  q_{t(e^{1}_{\alpha^{\prime}})t(e^{1}_{\alpha})}
	  q_{s(e^{2}_{\beta^{\prime}})s(e^{2}_{\beta})}\\
	&\quad= 
	  q_{t(e^{1}_{\alpha^{\prime}})t(e^{1}_{\alpha})}
	  q_{s(e^{1}_{\alpha^{\prime}})s(e^{1}_{\alpha})}
	  q_{s(e^{2}_{\beta^{\prime}})s(e^{2}_{\beta})}.
\end{align*}
Hence the fundamental unitary matrix of 
${\rm Aut}^{+}(\mathcal{G}_{1},\mathcal{G}_{2},\theta)$ reduces to
\begin{center} 
$\begin{pmatrix}
	q_{1} \ q_{2} \ q_{3} \ q_{4}\\
	q_{2} \ q_{1} \ q_{4} \ q_{3}\\
	q_{3} \ q_{4} \ q_{1} \ q_{2}\\
	q_{4} \ q_{3} \ q_{2} \ q_{1}\\
	\end{pmatrix}$,
\end{center}
where $q_{i}$'s are mutually orthogonal projections such that $\sum_{i=1}^{4}q_{i}=1$. 
Now it is easy to check that the above fundamental unitary 
corresponds to the CQG $C^{\ast}(\mathbb{Z}_{2}\times\mathbb{Z}_{2})$. 
Therefore, 
${\rm Aut}^{+}(\mathcal{G}_{1},\mathcal{G}_{2},\theta)$ 
is isomorphic to the classical group 
$\mathbb{Z}_{2}\times\mathbb{Z}_{2}$.

\subsection*{(c) Two copies of the graph $\mathcal{G}$ where 
			$\mathcal{G}$ is the complete graph on four vertices}
    \begin{center}    
     \begin{tikzpicture}[scale=.5]
      	\Vertex[label=$1$,position=above,shape=circle, size=0.25,color=black]{1}
	\Vertex[x=6,label=$2$,position=above,shape=circle, size=0.25,color=black]{2}
	\Vertex[y=-6,label=$3$,position=below,shape=circle, size=0.25,color=black]{3}
	\Vertex[x=6,y=-6,label=$4$,position=below,shape=circle, size=0.25,color=black]{4}

	\Edge[color=red,bend=20,label=$e_3$,Direct](1)(2);
	\Edge[color=red,label=$e_4$,Direct](2)(1);
	\Edge[color=red,label=$e_2$,Direct](1)(3);
	\Edge[color=red,bend=20,label=$e_1$,Direct](3)(1);
	\Edge[color=red,label=$e_8$,Direct](3)(4);
	\Edge[color=red,bend=20,label=$e_7$,Direct](4)(3);
	\Edge[color=red,bend=20,label=$e_6$,Direct](2)(4);
	\Edge[color=red,label=$e_5$,Direct](4)(2);
	\Edge[color=red,bend=20,label=$e_{12}$,Direct](4)(1);
	\Edge[color=red,bend=20,label=$e_{11}$,Direct](1)(4);
	\Edge[color=red,bend=20,label=$e_{10}$,Direct](2)(3);
	\Edge[color=red,bend=20,label=$e_9$,Direct](3)(2);
     \end{tikzpicture}
\end{center}
 The composable pairs are given by\\
\[
E\ \star\ E=\left\{
\begin{matrix}
&(e_1,e_2),&(e_1,e_{10}),&(e_1,e_7),&(e_2,e_4),&(e_2,e_{12}), &(e_2,e_1),\\
&(e_3,e_4),&(e_3,e_{12}),&(e_3,e_1),&(e_4,e_3),&(e_4,e_9),&(e_4,e_5),\\
&(e_5,e_6),&(e_5,e_{11}),&(e_5,e_8),&(e_6,e_5),&(e_6,e_3),&(e_6,e_9),\\
&(e_7,e_6),&(e_7,e_{11}),&(e_7,e_8),&(e_8,e_2),&(e_8,e_10),&(e_8,e_7),\\
&(e_9,e_2),&(e_9,e_{10}),&(e_9,e_7),&(e_{10},e_3),&(e_{10},e_9),&(e_{10},f5),\\
&(e_{11},e_1),&(e_{11},e_4),&(e_{11},e_{12}),&(e_{12},e_6),&(e_{12},e_{11}),&(e_{12},e_8)
\end{matrix}
\right\}.
\]
We choose the map $\theta$ to be the following: 
\begin{align*}
  &(e_1,e_2)\mapsto(e_4,e_3), \\
  &(e_4,e_3)\mapsto(e_1,e_2), \\
  &(e_8,e_7)\mapsto(e_6,e_5), \\
  &(e_6,e_5)\mapsto(e_8,e_7), 
\end{align*}
keeping rest of the elements of \(E\star E\) fixed. 
As usual, we write the action of 
${\rm Aut}^{+}(\mathcal{G},\mathcal{G},\theta)$ 
on the set of vertices by
\begin{displaymath}
    \alpha(\delta_{i})=\sum_{j=1}^{4}\delta_{j}\ot q_{ji},\text{ for } i=1,\ldots,4.
\end{displaymath}
We assert that
\begin{enumerate}
    \item \(q_{13}=q_{12}=0\) (implies \(q_{31}=q_{21}=0\)),
    \item \(q_{43}=q_{42}=0\) (implies \(q_{34}=q_{24}=0\)).
\end{enumerate}
To prove our assertions, we shall first establish the equality \(q_{13}=q_{12}\). 
By subsequently multiplying both sides of this equation by \(q_{13}\), 
we deduce that \(q_{13}\) equals zero.
To accomplish this, 
we show that \(q_{i1}q_{13}q_{j1}=q_{i1}q_{12}q_{j1}\) for all \(i,j=1,2,\ldots,4\). 
Recalling from equation \eqref{newrel} for \(\theta(e,f)=(f',e')\) 
and \(\theta(\mu,\nu)=(\nu',\mu')\) we have
\begin{equation*}
    q_{t(\mu)t(e)}q_{s(\mu)s(e)}q_{s(\nu)s(f)}
    =
    q_{t(\nu')t(f')}q_{s(\nu')s(f')}q_{s(\mu')s(e')}.
\end{equation*}
For \((e,f)=(e_1,e_2)\) and \((f',e')=(e_4,e_3)\) the above equation reduces to 
\[
  q_{t(\mu)1}q_{s(\mu)3}q_{s(\nu)1}=q_{t(\nu')1}q_{s(\nu')2}q_{t(\mu')1}.
\]
We have the following equations for different choices of 
\((\mu,\nu)\in\{(e_3,e_4),(e_3,e_1),\\
(e_3,e_{12}),(e_2,e_4),(e_2,e_1),(e_2,e_{12}),(e_{11},e_{4}),(e_{11},e_1),(e_{11},e_{12})\}\): 
\begin{equation*}
  \begin{matrix}
  q_{21}q_{13}q_{21}=q_{21}q_{12}q_{21}, 
  & 
  q_{21}q_{13}q_{31}=q_{21}q_{12}q_{31}, 
  & 
  q_{21}q_{13}q_{41}=q_{21}q_{12}q_{41},\\
  q_{31}q_{13}q_{21}=q_{31}q_{12}q_{21}, 
  & 
  q_{31}q_{13}q_{31}=q_{31}q_{12}q_{31}, 
  & 
  q_{31}q_{13}q_{41}=q_{31}q_{12}q_{41}, \\
  q_{41}q_{13}q_{21}=q_{41}q_{12}q_{21}, 
  & 
  q_{41}q_{13}q_{31}=q_{41}q_{12}q_{31}, 
  & 
  q_{41}q_{13}q_{41}=q_{41}q_{12}q_{41},
  \end{matrix}
\end{equation*}
whereas the other equations follow automatically as both sides of 
the equations are $0$ using the orthogonality properties of $q_{ij}$'s.
Adding up all the equations above we get 
\[
\sum_{i,j=1}^{4}q_{i1}q_{13}q_{j1}=\sum_{i,j=1}^{4}q_{i1}q_{12}q_{j1} 
\]
which clearly shows that \(q_{13}=q_{12}\).
A similar calculation shows that \(q_{43}=q_{42}\).
Therefore, the fundamental matrix takes the following form:
\begin{equation*}
    \left(
      \begin{matrix}
          q_{11} & 0 & 0 & q_{14} \\
          0     & q_{22} & q_{23} & 0 \\
          0     & q_{32} & q_{33} & 0 \\
          q_{41} & 0 & 0 & q_{44}
      \end{matrix}
    \right).
\end{equation*}
Assume that \(q_{11}=p\) and \(q_{22}=q\), where \(p\) and \(q\) are projections.  
Then the magic condition forces 
\(q_{41}=1-p=q_{14}, q_{44}=p, q_{32}=1-q=q_{23}, q_{33}=q\).
In the above calculation putting \((\mu,\nu)=(e_8,f_2)=(\nu',\mu')\) 
we see that
\[
   q_{41}q_{33}q_{11}=q_{41}q_{32}q_{11}.
\] 
Hence we have \((1-p)qp = (1-p)(1-q)p\) which implies \(qp=pqp\).
Consequently, \(qp=pqp=(qp)^*=pq\) shows the \(C^*\)-algebra generated 
by \(q_{ij}\)'s is commutative. Therefore, the quantum automorphism group 
is actually classical and in fact a subgroup of $\mathbb{Z}_{2}\times\mathbb{Z}_{2}$. 
Now it can be checked easily that none of the order 
$2$ elements in $S_{4}$ generating $\mathbb{Z}_{2}\times\mathbb{Z}_{2}$ 
belongs to the symmetry group. 
Hence the quantum automorphism group is not only classical, but also trivial. 

\section{Examples and computations II: Free product and free wreath product} \label{wreath product}

We begin this section by introducing a notion of quantum equivalence 
between a pair of triples \((\mathcal{G}_{1},\mathcal{G}_{2},\theta)\) 
and \((\mathcal{G}^{\prime}_{1},\mathcal{G}^{\prime}_{2},\theta^{\prime})\). 
We shall denote the sets of vertices of 
\((\mathcal{G}_{1},\mathcal{G}_{2},\theta)\) and 
\((\mathcal{G}^{\prime}_{1},\mathcal{G}^{\prime}_{2},\theta^{\prime})\)  
by $V,V^{\prime}$ respectively; the sets of edges of 
$\mathcal{G}_{1},\mathcal{G}_{2},\mathcal{G}^{\prime}_{1},\mathcal{G}^{\prime}_{2}$ 
by $E_{1},E_{2},E^{\prime}_{1},E^{\prime}_{2}$ respectively; 
the vertex matrices of 
$\mathcal{G}_{1},\mathcal{G}_{2},\mathcal{G}^{\prime}_{1},\mathcal{G}^{\prime}_{2}$ 
by $\varepsilon_{1},\varepsilon_{2},\varepsilon^{\prime}_{1}, \varepsilon^{\prime}_{2}$ 
respectively.

\begin{definition}
    \label{qiso}
    Two triples $(\mathcal{G}_{1},\mathcal{G}_{2},\theta)$ and 
    $(\mathcal{G}^{\prime}_{1},\mathcal{G}^{\prime}_{2},\theta^{\prime})$ 
    are said to be quantum equivalent 
    (to be denoted by $(\mathcal{G}_{1},\mathcal{G}_{2},\theta)
    \sim^{q}
    (\mathcal{G}^{\prime}_{1},\mathcal{G}^{\prime}_{2},\theta^{\prime})$) 
    if there is a $C^{\ast}$-algebra $\cla$ containing elements 
    $\{u_{ij}\}_{i\in V,j\in V^{\prime}}$ 
    such that
    \begin{enumerate}[(1)]
        \item the matrix $U=((u_{ij}))_{i\in V, j\in V^{\prime}}$ is a magic unitary;
        \item $U\varepsilon_{1}^{\prime}=\varepsilon_{1}U$, 
        		$U\varepsilon_{2}^{\prime}=\varepsilon_{2}U$;
        \item for all 
        		$(e^{1}_{\alpha^{\prime}},e^{2}_{\beta^{\prime}})\in E_{1}^{\prime}\star E_{2}^{\prime}$ 
            	and $(e^{2}_{\mu},e^{1}_{\nu})\in E_{2}\star E_{1}$ with 
		$\theta^{\prime}(e^{1}_{\alpha^{\prime}},e^{2}_{\beta^{\prime}})
		=(e^{2}_{\mu^{\prime}},e^{1}_{\nu^{\prime}})$ and 
		$\theta(e^{1}_{\alpha},e^{2}_{\beta})=(e^{2}_{\mu},e^{1}_{\nu})$
    		\begin{equation}
    		  \label{genrelqequivalent}
			u_{t(e^{2}_{\mu})t(e^{2}_{\mu^{\prime}})}
			u_{s(e^{2}_{\mu})s(e^{2}_{\mu^{\prime}})}
			u_{s(e^{1}_{\nu})s(e^{1}_{\nu^{\prime}})}
			=
			u_{t(e^{1}_{\alpha})t(e^{1}_{\alpha^{\prime}})}
			u_{s(e^{1}_{\alpha})s(e^{1}_{\alpha^{\prime}})}
			u_{s(e^{2}_{\beta})s(e^{2}_{\beta^{\prime}})}.
    		\end{equation}
    \end{enumerate}
\end{definition}

\begin{remark}
    Note that if the elements $\{u_{ij}\}_{i\in V,j\in V^{\prime}}$ in $\cla$ 
    satisfy conditions (1) and (2) of Definition \ref{qiso}, 
    then there is a well defined $C^{\ast}$-homomorphism 
    $\alpha:C(V^{\prime})\rightarrow C(V)\ot\cla$ given by 
    $\alpha(\delta_{j})=\sum_{i\in V}\delta_{i}\ot u_{ij}$ for 
    $j\in V^{\prime}$ where $\{\delta_{i}\}_{i\in V},\{\delta_{j}\}_{j\in V^{\prime}}$ 
    are canonical basis for $C(V)$ and $C(V^{\prime})$ respectively. 
    Consequently there is a well defined $\mathbb{C}$-linear map 
    $\alpha^{\ot 4}:C(V^{\prime}\times V^{\prime}\times V^{\prime} \times V^{\prime})
    \rightarrow 
    C(V\times V\times V\times V)\ot\cla$. 
    Also the map $\alpha^{\ot 4}$ maps $C(E_{1}^{\prime}\star E_{2}^{\prime})$ into 
    $C(E_{1}\star E_{2})\otimes\cla$ and $C(E_{2}^{\prime}\star E_{1}^{\prime})$ into 
    $C(E_{2}\star E_{1})\otimes\cla$. 
    Then condition (3) is equivalent to the following commutative diagram:
    \begin{center}
	\begin{tikzcd}
	C(E_{1}^{\prime}\star E_{2}^{\prime}) \arrow{r}{\alpha^{\ot 4}} \arrow[swap]{d}{\theta_{\ast}^{\prime}} 
	& 
	C(E_{1}\star E_{2})\ot \cla \arrow{d}{\theta_{\ast}\ot {\rm id}} \\
	C(E_{2}^{\prime}\star E_{1}^{\prime}) \arrow{r}{\alpha^{\ot 4}} 
	& 
	C(E_{2}\star E_{1})\ot \cla
	\end{tikzcd}
    \end{center}
    With appropriate choice of a basis, the above commutative diagram is equivalent to
    \begin{equation}
    \label{coordinate free}
        (U\ot U\ot U\ot U)\circ\theta^{\prime}=\theta\circ(U\ot U\ot U\ot U),
    \end{equation}
    where with abuse of notation, we have denoted the matrix of the linear maps 
    $\theta_{\ast}$ and $\theta^{\prime}_{\ast}$ by $\theta$ and $\theta^{\prime}$ respectively. 
\end{remark}

\begin{lemma}
    \label{well defined}
    $\sim^{q}$ is an equivalence relation.
\end{lemma}

\begin{proof}
    (1) Reflexivity: For a triple \((\mathcal{G}_{1},\mathcal{G}_{2},\theta)\) 
    with a vertex set $V$, take the $C^{\ast}$-algebra to be $\mathbb{C}$, 
    and the elements $\{u_{ij}\}_{i,j\in V}$ to be $u_{ii}=1$ and $u_{ij}=0$ for $i\neq j$. 
    
    \medskip
    
    \noindent (2) Symmetry: Suppose 
    $(\mathcal{G}_{1},\mathcal{G}_{2},\theta)
    \sim^{q}
    (\mathcal{G}^{\prime}_{1},\mathcal{G}_{2}^{\prime},\theta^{\prime})$. 
    Let $\cla$ be the $C^{\ast}$-algebra with elements $\{u_{ij}\}_{i\in V, j\in V^{\prime}}$ 
    establishing the equivalence. To show that 
    $(\mathcal{G}^{\prime}_{1},\mathcal{G}^{\prime}_{2},\theta^{\prime})
    \sim^{q}
    (\mathcal{G}_{1},\mathcal{G}_{2},\theta)$ 
    we take the $C^{\ast}$-algebra to be $\cla$ 
    and consider the set of elements 
    $\{v_{ji}\}_{j\in V^{\prime},i\in V}$ where \(v_{ji}=u_{ij}\).

    \begin{enumerate}[(a)]
    	\item $\sum_{i\in V}v_{ji}
		=\sum_{i\in V}u_{ij}
		=1$ 
		and 
		$\sum_{j\in V^{\prime}}v_{ji}
		=\sum_{j\in V^{\prime}}u_{ij}
		=1$ 
		for every $j\in V^{\prime}, i\in V$.
   	\item  For $l=1,2$ using symmetry of the vertex matrices 
		of $\clg_{l},\clg_{l}^{\prime}$,
     		\begin{align*}
        		    (\varepsilon_{l}V)_{ji} 
		    &= \sum\limits_{k\in V^{\prime}}(\varepsilon_{l})_{jk}v_{ki} \\
        		    &=  \sum\limits_{k\in V^{\prime}}(\varepsilon_{l})_{jk}u_{ik} \\
        		    &= \sum\limits_{k\in V^{\prime}}u_{ik}(\varepsilon_{l})_{kj} \\
        		    &= (U\varepsilon_{l})_{ij} \\
        		    &= (\varepsilon_{l}^{\prime}U)_{ij} \\
        		    &= \sum\limits_{k\in V}(\varepsilon_{l}^{\prime})_{ik}u_{kj} \\
        		    &= \sum\limits_{k\in V}v_{jk}(\varepsilon_{l}^{\prime})_{ki} \\
        		    &= (V\varepsilon_{l}^{\prime})_{ji}. 
    		\end{align*}
   	\item  As for the third condition of Definition \ref{qiso}, 
		let $\theta(e^{1}_{\alpha},e^{2}_{\beta})=(e^{2}_{\mu},e^{1}_{\nu})$ 
		and $\theta^{\prime}(e^{1}_{\alpha^{\prime}},e^{2}_{\beta^{\prime}})
		=(e^{2}_{\mu^{\prime}},e^{1}_{\nu^{\prime}})$. Then 
    		\begin{eqnarray*}
		    v_{t(e^{2}_{\mu^{\prime}})t(e^{2}_{\mu})}
		    v_{s(e^{2}_{\mu^{\prime}})s(e^{2}_{\mu})}
		    v_{s(e^{1}_{\nu^{\prime}})s(e^{1}_{\nu})}
		    &=&
		    u_{t(e^{2}_{\mu})t(e^{2}_{\mu^{\prime}})}
		    u_{s(e^{2}_{\mu})s(e^{2}_{\mu^{\prime}})}
		    u_{s(e^{1}_{\nu})s(e^{1}_{\nu^{\prime}})}; \\ 
		    v_{t(e^{1}_{\alpha^{\prime}})t(e^{1}_{\alpha})}
		    v_{s(e^{1}_{\alpha^{\prime}})s(e^{1}_{\alpha})}
		    v_{s(e^{2}_{\beta^{\prime}})s(e^{2}_{\beta})}
		    &=& 
		    u_{t(e^{1}_{\alpha})t(e^{1}_{\alpha^{\prime}})}
		    u_{s(e^{1}_{\alpha})s(e^{1}_{\alpha^{\prime}})}
		    u_{s(e^{2}_{\beta})s(e^{2}_{\beta^{\prime}})}.
    		\end{eqnarray*}    
		Now the right hand sides of the above two equations are equal as 
		$(\mathcal{G}_{1},\mathcal{G}_{2},\theta)
		\sim^{q}
		(\mathcal{G}^{\prime}_{1},\mathcal{G}_{2}^{\prime},\theta^{\prime})$. 
		Therefore the left hand sides are also equal proving that 
		$(\mathcal{G}^{\prime}_{1},\mathcal{G}^{\prime}_{2},\theta^{\prime})
		\sim^{q}
		(\mathcal{G}_{1},\mathcal{G}_{2},\theta)$.
	\end{enumerate}
	
    \medskip
    
    \noindent (3) Transitivity: Suppose 
    $(\mathcal{G}_{1},\mathcal{G}_{2},\theta)
    \sim^{q}
    (\mathcal{G}^{\prime}_{1},\mathcal{G}_{2}^{\prime},\theta^{\prime})$
     and 
     $(\mathcal{G}^{\prime}_{1},\mathcal{G}^{\prime}_{2},\theta^{\prime})
     \sim^{q}
     (\mathcal{G}^{\prime\prime}_{1},\mathcal{G}_{2}^{\prime\prime},\theta^{\prime\prime})$; 
     let $\cla$ be the $C^{\ast}$-algebra with elements $\{u_{ij}\}_{i\in V,j\in V^{\prime}}$ 
     establishing the quantum equivalence 
     $(\mathcal{G}_{1},\mathcal{G}_{2},\theta)
     \sim^{q}
     (\mathcal{G}^{\prime}_{1},\mathcal{G}_{2}^{\prime},\theta^{\prime})$ 
     and 
     $\clb$ be the $C^{\ast}$-algebra with elements 
     $\{v_{ij}\}_{i\in V^{\prime},j\in V^{\prime\prime}}$ 
     establishing the quantum equivalence 
     $(\mathcal{G}^{\prime}_{1},\mathcal{G}^{\prime}_{2},\theta^{\prime})
     \sim^{q}
     (\mathcal{G}^{\prime\prime}_{1},\mathcal{G}_{2}^{\prime\prime},\theta^{\prime\prime})$. 
     We denote the vertex matrices of 
     $\mathcal{G}_{l},\mathcal{G}_{l}^{\prime},\clg_{l}^{\prime\prime}$
      by 
      $\varepsilon_{l},\varepsilon_{l}^{\prime},\varepsilon^{\prime\prime}_{l}$ 
      respectively for $l=1,2$. 
      Consider a $C^{\ast}$-algebra $\clc:=\cla\hat{\ot}\clb$ 
      (one can take any $C^{\ast}$-algebraic completion of 
      $\cla\ot_{\rm alg}\clb$) with elements 
      $\{w_{ij}\}_{i\in V,j\in V^{\prime\prime}}$ defined by
      \begin{displaymath}
    	w_{ij}:=\sum_{k\in V^{\prime}}u_{ik}\ot v_{kj}.
      \end{displaymath}
     If we denote the $\clc$-valued matrix $((w_{ij}))_{i\in V,j\in V^{\prime\prime}}$ 
     by $W$, then it is easy to check that $W$ is again a magic unitary. 
     As for the other conditions, for $l=1,2$,
    \begin{align*}
   	(\varepsilon_{l}W)_{ij}&=\sum_{m\in V^{\prime}}(\varepsilon_{l})_{im}w_{mj}\\
   	&= \sum_{m,k\in V^{\prime}}(\varepsilon_{l})_{im}u_{mk}\ot v_{kj}\\
   	&= \sum_{m,k\in V^{\prime}}u_{im}\ot(\varepsilon^{\prime}_{l})_{mk}v_{kj}\\
   	&= \sum_{m,k\in V^{\prime}}u_{im}\ot v_{mk}(\varepsilon^{\prime\prime}_{l})_{kj}\\
   	&=\sum_{k\in V^{\prime}}w_{ik}(\varepsilon^{\prime\prime})_{kj}\\
   	&= (W\varepsilon^{\prime\prime}_{l})_{ij}.
    \end{align*}
    Therefore, $\varepsilon_{l}W=W\varepsilon^{\prime\prime}_{l}$ for $l=1,2$. 
    The third condition of Definition \ref{qiso} can be verified 
    along the lines of the proof of the fact that the ideal generated by relations~\eqref{newrel} 
    is a Woronowicz ideal and we leave it to the reader. 
\end{proof}

\begin{lemma}
    \label{eqqeq}
    Two equivalent triples are quantum equivalent.
\end{lemma}

\begin{proof}
    For two equivalent triples with vertex sets $V, V^{\prime}$, 
    let $T:V\raro V^{\prime}$ be the bijection. 
    Then  one can take the $C^{\ast}$-algebra in the definition of 
    quantum equivalence to be $\mathbb{C}$ and the elements 
    $\{t_{ij}\}_{i\in V, j\in V^{\prime}}\in\mathbb{C}$ to be the entries 
    of the permutation matrix representing the bijection $T$. 
    Then by the definition of equivalence of triples, $\{t_{ij}\}$'s 
    satisfy conditions (1), (2) of Definition \ref{qiso} and the the 
    matrix $((t_{ij}))$ satisfies Equation \eqref{coordinate free}. 
\end{proof}

Thanks to Lemma \ref{well defined} and Lemma \ref{eqqeq} we can 
define the notion of quantum isomorphism between a pair of $2$-graphs.

\begin{definition}
    \label{qiso2graph}
    Let $(\Lambda,d)$ and $(\Lambda^{\prime},d^{\prime})$ be two 
    $2$-graphs with the defining triples 
    $(\mathcal{G}_{1},\mathcal{G}_{2},\theta)$ and 
    $(\mathcal{G}^{\prime}_{1},\mathcal{G}^{\prime}_{2},\theta^{\prime})$ respectively. 
    Then $(\Lambda,d)$ and $(\Lambda^{\prime},d^{\prime})$ 
    are defined to be quantum isomorphic if 
    $(\mathcal{G}_{1},\mathcal{G}_{2},\theta)
    \sim^{q} 
    (\mathcal{G}^{\prime}_{1},\mathcal{G}^{\prime}_{2},\theta^{\prime})$. 
\end{definition}

\begin{remark}
    \label{equiv_qequiv}
     The definition of quantum equivalence opens up the question 
     to find an example of a pair of triples which are not classically equivalent, 
     but quantum equivalent. We do not pursue this question in this paper.
\end{remark}

Now observe that by the definition of quantum equivalence, 
a pair of triples with vertex sets $V$ and $V^{\prime}$ such that 
$|V|\neq |V^{\prime}|$, can not be quantum equivalent. 
Therefore only pairs with same cardinality of vertex sets 
are of real interest in the context of quantum equivalence. 
From now on we shall have the following assumptions on 
the triples unless mentioned otherwise:\\
{\bf Standing assumption}: (i) Given a pair of triples 
$(\clg_{1},\clg_{2},\theta)$ and 
$(\clg_{1}^{\prime},\clg_{2}^{\prime},\theta^{\prime})$, 
the underlying vertex sets have same cardinality i.e. $|V|=|V^{\prime}|$.\\
(ii) All the graphs $\clg_{1},\clg_{2},\clg_{1}^{\prime},\clg_{2}^{\prime}$ 
are connected, bidirected, without multiple edges or loops.

 Given a pair of triples 
$(\mathcal{G}_{1},\mathcal{G}_{2},\theta)$ and 
$(\mathcal{G}^{\prime}_{1},\mathcal{G}^{\prime}_{2},\theta^{\prime})$, 
we consider the triple $(\Gamma_{1},\Gamma_{2},\Theta)$, 
where $\Gamma_{l}$ are given by the disjoint unions 
$\mathcal{G}_{l}\sqcup\mathcal{G}^{\prime}_{l}$ for $l=1,2$. 
Then the common vertex set of $\Gamma_{1},\Gamma_{2}$ 
is given by $V\sqcup V^{\prime}$ and the composable pairs are 
given by $(E_{1}\star E_{2})\sqcup(E_{1}^{\prime}\star E_{2}^{\prime})$. 
Moreover, $\Theta$ is given by $\theta$ on $E_{1}\star E_{2}$ and 
$\theta^{\prime}$ on $E^{\prime}_{1}\star E^{\prime}_{2}$. 
Now we shall consider the quantum automorphism group 
of the triple $(\Gamma_{1},\Gamma_{2},\Theta)$ as it will be 
shown to be intimately connected to the question of quantum 
equivalence of the pair $(\mathcal{G}_{1},\mathcal{G}_{2},\theta)$ 
and $(\mathcal{G}_{1}^{\prime},\mathcal{G}^{\prime}_{2},\theta^{\prime})$. 
We remark that the graphs $\Gamma_{1}, \Gamma_{2}$ are no longer connected. 
But they are bidirected without multiple edges, loops or isolated vertices. 
Note that the fundamental unitary matrix of 
${\rm Aut}^{+}(\Gamma_{1},\Gamma_{2},\Theta)$ 
can be written as the following block matrix:

\begin{equation}
  \label{Fun-unit}
     \left( 
       \begin{BMAT}(e)[2pt,2cm,2cm]{c.c}{c.c}
            ((q_{ij}))_{i,j\in V} & ((q_{il}))_{i\in V,l\in V^{\prime}} \\ 
            ((q_{kj}))_{k\in V^{\prime},j\in V} & ((q_{kl}))_{k,l\in V^{\prime}} 
        \end{BMAT}
     \right)
\end{equation}
Consider the upper off diagonal block. For each $i\in V$, we define a projection 
$p_{i}:=\sum_{l\in V^{\prime}}q_{il}$ and for each $l\in V^{\prime}$, 
define $p_{l}^{\prime}:=\sum_{i\in V}q_{il}$. Then using the standing 
assumptions on the pair 
$(\mathcal{G}_{1},\mathcal{G}_{2},\theta), 
(\mathcal{G}_{1}^{\prime},\mathcal{G}_{2}^{\prime},\theta^{\prime})$, 
one can prove the following  
\begin{lemma}
    \label{common projection}
    For any $i_{1},i_{2}\in V$ and $l_{1}, l_{2}\in V^{\prime}$, 
    $p_{i_{1}}=p_{i_{2}}=p^{\prime}_{l_{1}}=p^{\prime}_{l_{2}}$.
\end{lemma}

\begin{proof}
    See the proof of~\cite{Qinfo}*{Theorem 4.4}.
\end{proof}

We denote any of the above projections by $p$. 
Then we extract the relevant part of~\cite{Qinfo}*{Theorem 4.4} 
for our purpose and state it as the following proposition:

\begin{proposition}
    \label{equivalent condition}
    For a pair of triples $(\mathcal{G}_{1},\mathcal{G}_{2},\theta)$ and 
    $(\mathcal{G}^{\prime}_{1},\mathcal{G}^{\prime}_{2},\theta^{\prime})$ 
    with standing assumptions, 
    $(\mathcal{G}_{1},\mathcal{G}_{2},\theta)
    \sim^{q}
    (\mathcal{G}^{\prime}_{1},\mathcal{G}^{\prime}_{2},\theta^{\prime})$ 
    if and only if $p\neq 0$.
\end{proposition}

\begin{proof}
    We essentially adapt and extend the proof of~\cite{Qinfo}*{Theorem 4.4}. 
    Suppose 
    $(\mathcal{G}_{1},\mathcal{G}_{2},\theta)
    \sim^{q}
    (\mathcal{G}^{\prime}_{1},\mathcal{G}^{\prime}_{2},\theta^{\prime})$. 
    Then by definition, there is a $C^{\ast}$-algebra $\cla$ containing elements 
    $\{u_{il}\}_{i\in V,l\in V^{\prime}}$ such that
    \begin{enumerate}[(i)]
        \item the matrix $U=((u_{il}))_{i\in V,l\in V^{\prime}}$ is a magic unitary;
        \item $\varepsilon_{1}U=U\varepsilon_{1}^{\prime}$, 
        		$\varepsilon_{2}U=U\varepsilon_{2}^{\prime}$;
        \item For all $(e^{1}_{\alpha^{\prime}},e^{2}_{\beta^{\prime}})\in E_{1}^{\prime}\star E_{2}^{\prime}$ 
        		and $(e^{2}_{\mu},e^{1}_{\nu})\in E_{2}\star E_{1}$ 
		with 
		$\theta^{\prime}(e^{1}_{\alpha^{\prime}},e^{2}_{\beta^{\prime}})
		=
		(e^{2}_{\mu^{\prime}},e^{1}_{\nu^{\prime}})$ 
		and 
		$\theta(e^{1}_{\alpha},e^{2}_{\beta})
		=
		(e^{2}_{\mu},e^{1}_{\nu})$
    		\begin{equation}
    		  \label{genrelqequivalent2}
		    u_{t(e^{2}_{\mu})t(e^{2}_{\mu^{\prime}})}
		    u_{s(e^{2}_{\mu})s(e^{2}_{\mu^{\prime}})}
		    u_{s(e^{1}_{\nu})s(e^{1}_{\nu^{\prime}})} 
		    = 
		    u_{t(e^{1}_{\alpha})t(e^{1}_{\alpha^{\prime}})}
		    u_{s(e^{1}_{\alpha})s(e^{1}_{\alpha^{\prime}})}
		    u_{s(e^{2}_{\beta})s(e^{2}_{\beta^{\prime}})}.
    		\end{equation}
    \end{enumerate}
    We define a representation 
    $\pi$ of ${\rm Aut}^{+}(\Gamma_{1},\Gamma_{2},\Theta)$ in $\cla$ 
    by using the universal property of ${\rm Aut}^{+}(\Gamma_{1},\Gamma_{2},\Theta)$. 
    Suppose \(W=\begin{pmatrix}
      	    0 & U\\
            U^{\top} & 0
  	\end{pmatrix}\).
    It is clear that $W$ is a magic unitary. Using the facts that 
    $\varepsilon_{l}U=U\varepsilon_{l}^{\prime}$ for $l=1,2$ 
    and following the same calculations done in the proof of symmetry 
    of the relation $\sim^{q}$, it is easy to see that  
    \begin{equation*}
      W
  	\begin{pmatrix}
      	    \varepsilon_l & 0\\
            0 & \varepsilon^{\prime}_l
  	\end{pmatrix}
  	=
 	 \begin{pmatrix}
    	    \varepsilon_l & 0\\
     	    0 & \varepsilon^{\prime}_l
  	\end{pmatrix}
      W;
    \end{equation*}
    We shall show that matrix entries of $W$ satisfy Equation~\eqref{newrel}. 
    By the definition of $\Theta$, there are three cases:
    \begin{itemize}
        \item $(e^{1}_{\alpha},e^{2}_{\beta})\in E_{1}\star E_{2}$, 
        		$(e^{2}_{\gamma},e^{1}_{\delta})\in E_{2}\star E_{1}$ 
		such that 
		$\theta(e^{1}_{\alpha},e^{2}_{\beta})
		=
		(e^{2}_{\mu},e^{1}_{\nu})\in E_{2}\star E_{1}$ 
		and 
		$\theta(e^{1}_{\zeta},e^{2}_{\eta})
		=
		(e^{2}_{\gamma},e^{1}_{\delta})\in E_{2}\star E_{1}$: 
		In this case both sides of Equation~\eqref{newrel} are zero.
	\item Similarly if the composable pairs come from the second triple, 
		both sides of Equation~\eqref{newrel} are zero.
	\item For $(e^{1}_{\alpha^{\prime}},e^{2}_{\beta^{\prime}})
		\in 
		E_{1}^{\prime}\star E_{2}^{\prime}$ 
		and 
		$(e^{2}_{\mu},e^{1}_{\nu})\in E_{2}\star E_{1}$ 
		with 
		$\theta^{\prime}(e^{1}_{\alpha^{\prime}},e^{2}_{\beta^{\prime}})
		=
		(e^{2}_{\mu^{\prime}},e^{1}_{\nu^{\prime}})$ 
		and 
		$\theta(e^{1}_{\alpha},e^{2}_{\beta})
		=
		(e^{2}_{\mu},e^{1}_{\nu})$, 
		the Equation~\eqref{newrel} are satisfied by Equation~\eqref{genrelqequivalent}.\\
		\indent Therefore, by the universal property of 
		${\rm Aut}^{+}(\Gamma_{1},\Gamma_{2},\Theta)$, 
		there is a well defined $C^{\ast}$-homomorphism 
		$\pi:{\rm Aut}^{+}(\Gamma_{1},\Gamma_{2},\Theta)\rightarrow\cla$ 
		such that $\pi(q_{il})=u_{il}$ for $i\in V, l\in V^{\prime}$. 
		Hence, $\pi(p)=1\in\cla$ proving that 
		$p\neq 0$ in ${\rm Aut}^{+}(\Gamma_{1},\Gamma_{2},\Theta)$.
    \end{itemize}

    \medskip
    
    Conversely, let $p\neq 0$. Then consider the $C^{\ast}$-subalgebra 
    $\cla$ of ${\rm Aut}^{+}(\Gamma_{1},\Gamma_{2},\Theta)$ generated 
    by the entries of the upper off diagonal entries i.e. $\{q_{il}\}_{i\in V,l\in V^{\prime}}$. 
    Observe that in $\cla$, $p$ acts as the identity element making 
    the matrix $Q=((q_{il}))_{i\in V,l\in V^{\prime}}$ a magic unitary. 
    The commutation of $U$ with the matrices 
    $\begin{pmatrix}
      \varepsilon_l & 0\\
      0 & \varepsilon^{\prime}_l
    \end{pmatrix}$ 
    implies that $\varepsilon_{l}Q=Q\varepsilon_{l}^{\prime}$ for $l=1,2$. 
    As $U$ is the fundamental unitary of 
    ${\rm Aut}^{+}(\Gamma_{1},\Gamma_{2},\Theta)$, 
    considering composable pairs from $E_{1}\star E_{2}$ and 
    $E_{1}^{\prime}\star E_{2}^{\prime}$, it is easy to see that the elements 
    $\{q_{il}\}_{i\in V, l\in V^{\prime}}$ satisfy the relations~\eqref{genrelqequivalent}. 
    Therefore, by definition, 
    $(\clg_{1},\clg_{2},\theta)\sim^{q}(\clg_{1}^{\prime},\clg_{2}^{\prime},\theta^{\prime})$. 
  \end{proof}
  
Again let $(\clg_{1},\clg_{2},\theta)$ and 
$(\clg_{1}^{\prime},\clg_{2}^{\prime},\theta^{\prime})$ be two triples. 
Then recall the triple $(\Gamma_{1},\Gamma_{2},\Theta)$ 
formed by taking disjoint union. 
In the next corollary we remind the readers the standing assumptions. 
The reader is referred to \cite{freewang} for details on 
free product of compact quantum groups. 

\begin{corollary}
  \label{qisocriterion}
     Let $(\clg_{1},\clg_{2},\theta)$ and 
     $(\clg_{1}^{\prime},\clg_{2}^{\prime},\theta^{\prime})$ 
     be triples such that the vertex sets have same cardinality; 
     all the graphs $\clg_{l},\clg_{l}^{\prime}$ for $l=1,2$ are connected, 
     bidirected without loops or multiple edges. 
     Then $(\clg_{1},\clg_{2},\theta)$ is not quantum equivalent to 
     $(\clg_{1}^{\prime},\clg_{2}^{\prime},\theta^{\prime})$ 
     if and only if 
     ${\rm Aut}^{+}(\Gamma_{1},\Gamma_{2},\Theta)
     \cong 
     {\rm Aut}^{+}(\clg_{1},\clg_{2},\theta)
     \ast 
     {\rm Aut}^{+}(\clg_{1}^{\prime},\clg_{2}^{\prime},\theta^{\prime})$.   
\end{corollary}

\begin{proof}
      (\(\Rightarrow\)) Let us denote by $U$ the fundamental unitary 
      of ${\rm Aut}^{+}(\Gamma_{1},\Gamma_{2},\Theta)$ in~\eqref{Fun-unit}.
    Then by Proposition \ref{equivalent condition}, the projections 
    $p_{l}=\sum_{i\in V}q_{il}$ are zero for all $l\in V^{\prime}$. 
    By orthogonality, $q_{il}=0$ for all $i\in V,l\in V^{\prime}$. 
    Thus the upper off diagonal block becomes $0$. 
    Applying $\kappa$, the lower off diagonal block also vanishes. 
    Therefore the fundamental unitary $U$ reduces to
    \(\begin{pmatrix}
      	    Q & 0\\
            0 & Q^{\prime}
  	\end{pmatrix}\)
	where \(Q=((q_{ij}))_{i,j\in V}\) and \(Q^{\prime}=((q_{kl}))_{k,l\in V^{\prime}}\).
    Then $U$ commutes with 
    $\begin{pmatrix}
         \varepsilon_l & 0\\
         0 & \varepsilon^{\prime}_l
      \end{pmatrix}$ 
      if and only if 
      $\varepsilon_{l}Q=Q\varepsilon_{l}$ and 
      $Q^{\prime}\varepsilon_{l}^{\prime}=\varepsilon_{l}^{\prime}Q^{\prime}$. 
      As for the conditions involving $\Theta$, by the definition of $\Theta$, 
      composable pair coming from $E_{1}\star E_{2}$ is mapped to 
      $E_{2}\star E_{1}$ by $\theta$ and composable pair coming from 
      $E_{1}^{\prime}\star E_{2}^{\prime}$ is mapped to 
      $E_{2}^{\prime}\star E_{1}^{\prime}$ by $\theta^{\prime}$. 
      Again there are three cases:
  \begin{enumerate}[(i)]
      \item $\theta(e^{1}_{\alpha},e^{2}_{\beta})
      		=(e^{2}_{\mu},e^{1}_{\nu})\in E_{2}\star E_{1}$ 
		and 
		$\theta^{\prime}(e^{1}_{\alpha^{\prime}},e^{2}_{\beta^{\prime}})
		=
		(e^{2}_{\mu^{\prime}},e^{1}_{\nu^{\prime}})\in E_{2}^{\prime}\star E_{1}^{\prime}$: 
		In this case both sides of the equation \eqref{newrel} are zero.
  
       \item $\theta(e^{1}_{\alpha},e^{2}_{\beta})
       		=(e^{2}_{\mu},e^{1}_{\nu})\in E_{2}\star E_{1}$ 
		and 
		$\theta(e^{1}_{\gamma},e^{2}_{\delta})
		=
		(e^{2}_{\zeta},e^{1}_{\eta})\in E_{2}\star E_{1}$: 
		In  this case $((q_{ij}))_{i,j\in V}$ satisfies the 
		relations~\eqref{newrel} of ${\rm Aut}^{+}(\clg_{1},\clg_{2},\theta)$.
  
       \item  $\theta(e^{1}_{\alpha^{\prime}},e^{2}_{\beta^{\prime}})
       		=(e^{2}_{\mu^{\prime}},e^{1}_{\nu^{\prime}})\in E_{2}^{\prime}\star E_{1}^{\prime}$ 
		and 
		$\theta(e^{1}_{\gamma^{\prime}},e^{2}_{\delta^{\prime}})
		=
		(e^{2}_{\zeta^{\prime}},e^{1}_{\eta^{\prime}})\in E_{2}^{\prime}\star E_{1}^{\prime}$: 
		In  this case $((q_{kl}))_{k,l\in V^{\prime}}$ satisfies the 
		relations~\eqref{newrel} of ${\rm Aut}^{+}(\clg_{1}^{\prime},\clg_{2}^{\prime},\theta^{\prime})$.
  \end{enumerate} 
  Therefore, 
  ${\rm Aut}^{+}(\Gamma_{1},\Gamma_{2},\Theta)
  \cong 
  {\rm Aut}^{+}(\clg_{1},\clg_{2},\theta)
  \ast
  {\rm Aut}^{+}(\clg_{1}^{\prime},\clg_{2}^{\prime},\theta^{\prime})$.
  
  (\(\Leftarrow\)) It is easy to see that the projection $p$ from 
  Lemma~\ref{common projection} is zero and therefore by 
  Proposition~\ref{equivalent condition}, $(\clg_{1},\clg_{2},\theta)$ 
  is not quantum equivalent to $(\clg_{1}^{\prime},\clg_{2}^{\prime},\theta^{\prime})$.
\end{proof}
  
  By virtue of Corollary~\ref{qisocriterion} we provide an example 
  of a non quantum equivalent pair of triples. 
  To do this, let us consider the following pair of triples:
  \begin{enumerate}[(i)]
     \item $\clg_{1}=\clg_{2}=K_{4}$,  $\theta={\rm id}$;
     \item $\clg_{1}^{\prime}=\clg_{2}^{\prime}=K_{4}$,  $\theta^{\prime}$ 
     	     is given as in example (c) of Section \(4\)
  \end{enumerate}
  and make the disjoint union $(\Gamma_{1},\Gamma_{2},\Theta)$. 
  We are going to use the properties of free wreath product of 
  compact quantum groups by the quantum permutation groups. 
  For details on free wreath product, the reader is referred to~\cite{bichon2004free}.
  
\begin{theorem}
  \label{main computation}
    ${\rm Aut}^{+}(\Gamma_{1},\Gamma_{2},\Theta)
      \cong 
      {\rm Aut}^{+}(\clg_{1},\clg_{2},\theta)
      \ast
      {\rm Aut}^{+}(\clg_{1}^{\prime},\clg_{2}^{\prime},\theta^{\prime})
      \cong 
      S_{4}^{+}$ 
      and consequently the triples are not quantum equivalent.
\end{theorem}
  
\begin{proof}
   We label the vertices of $\clg_{1}=\clg_{2}=K_{4}$ by $1,2,3,4$ 
   and the vertices of $\clg_{1}^{\prime}=\clg_{2}^{\prime}=K_{4}$ by $5,6,7,8$ 
   as shown in the following figure:
   \begin{center}    
     \begin{tikzpicture}[scale=.5]
	\Vertex[label=$1$,position=above,shape=circle, size=0.25,color=black]{1}
	\Vertex[x=6,label=$2$,position=above,shape=circle, size=0.25,color=black]{2}
	\Vertex[y=-6,label=$3$,position=below,shape=circle, size=0.25,color=black]{3}
	\Vertex[x=6,y=-6,label=$4$,position=below,shape=circle, size=0.25,color=black]{4}

	\Vertex[x=12,label=$5$,position=above,shape=circle, size=0.25,color=black]{5}
	\Vertex[x=18,label=$6$,position=above,shape=circle, size=0.25,color=black]{6}
	\Vertex[x=12,y=-6,label=$7$,position=below,shape=circle, size=0.25,color=black]{7}
	\Vertex[x=18,y=-6,label=$8$,position=below,shape=circle, size=0.25,color=black]{8}

	\Edge[color=red,bend=20,label=$e_3$,Direct](1)(2);
	\Edge[color=red,label=$e_4$,Direct](2)(1);
	\Edge[color=red,label=$e_2$,Direct](1)(3);
	\Edge[color=red,bend=20,label=$e_1$,Direct](3)(1);
	\Edge[color=red,label=$e_8$,Direct](3)(4);
	\Edge[color=red,bend=20,label=$e_7$,Direct](4)(3);
	\Edge[color=red,bend=20,label=$e_6$,Direct](2)(4);
	\Edge[color=red,label=$e_5$,Direct](4)(2);
	\Edge[color=red,bend=20,label=$e_{12}$,Direct](4)(1);
	\Edge[color=red,bend=20,label=$e_{11}$,Direct](1)(4);
	\Edge[color=red,bend=20,label=$e_{10}$,Direct](2)(3);
	\Edge[color=red,bend=20,label=$e_9$,Direct](3)(2);

	\Edge[color=blue,bend=20,label=$f_3$,Direct](5)(6);
	\Edge[color=blue,label=$f_4$,Direct](6)(5);
	\Edge[color=blue,label=$f_2$,Direct](5)(7);
	\Edge[color=blue,bend=20,label=$f_1$,Direct](7)(5);
	\Edge[color=blue,label=$f_8$,Direct](7)(8);
	\Edge[color=blue,bend=20,label=$f_7$,Direct](8)(7);
	\Edge[color=blue,bend=20,label=$f_6$,Direct](6)(8);
	\Edge[color=blue,label=$f_5$,Direct](8)(6);
	\Edge[color=blue,bend=20,label=$f_{12}$,Direct](8)(5);
	\Edge[color=blue,bend=20,label=$f_{11}$,Direct](5)(8);
	\Edge[color=blue,bend=20,label=$f_{10}$,Direct](6)(7);
	\Edge[color=blue,bend=20,label=$f_9$,Direct](7)(6);
     \end{tikzpicture}
   \end{center}
   Recall the composable pairs from example (c) of Section \(4\). 
   We do not mention them here. We take $\theta$ to be identity on 
   \(E\star E\) for the red graph and the non trivial 
   $\theta^{\prime}$ as in the example (c) for the blue graph. 
   Recall \(\theta^{\prime}\)  is given by
      \begin{eqnarray*}
       &&\theta^{\prime}(f_{1},f_{2})=(f_{4},f_{3})\\
       &&\theta^{\prime}(f_{4},f_{3})=(f_{1},f_{2})\\
       &&\theta^{\prime}(f_{8},f_{7})=(f_{6},f_{5})\\
       &&\theta^{\prime}(f_{6},f_{5})=(f_{8},f_{7}),
   \end{eqnarray*}
   and $\theta^{\prime}$ acts as identity on the rest of the composable pairs. 
   The vertex matrices of the graphs $\Gamma_{l}$ for $l=1,2$ are given by 
   $\begin{pmatrix}
      \varepsilon & 0\\
      0 & \varepsilon
  \end{pmatrix}$ 
  where $\varepsilon$ is the vertex matrix of $K_{4}$. 
  We write the fundamental unitary $U$ of 
  ${\rm Aut}^{+}(\Gamma_{1},\Gamma_{2},\Theta)$ 
  as the following block matrix:
  \begin{equation*}
     \left( 
       \begin{BMAT}(e)[3pt,1cm,1cm]{cccc.cccc}{cccc.cccc}
            q_{11} & q_{12} & q_{13} & q_{14} & q_{15} & q_{16} & q_{17} & q_{18}\\
            q_{21} & q_{22} & q_{23} & q_{24} & q_{25} & q_{26} & q_{27} & q_{28}\\
            q_{31} & q_{32} & q_{33} & q_{34} & q_{35} & q_{36} & q_{37} & q_{38}\\
            q_{41} & q_{42} & q_{43} & q_{44} & q_{45} & q_{46} & q_{47} & q_{48}\\
            q_{51} & q_{52} & q_{53} & q_{54} & q_{55} & q_{56} & q_{57} & q_{58}\\
            q_{61} & q_{62} & q_{63} & q_{64} & q_{65} & q_{66} & q_{67} & q_{68}\\
            q_{71} & q_{72} & q_{73} & q_{74} & q_{75} & q_{76} & q_{77} & q_{78}\\
            q_{81} & q_{82} & q_{83} & q_{84} & q_{85} & q_{86} & q_{87} & q_{88} 
        \end{BMAT}
     \right)
   \end{equation*}
   Then the fundamental unitary $U$ commutes with 
   $\begin{pmatrix}
      \varepsilon & 0\\
      0 & \varepsilon
   \end{pmatrix}$ 
   if and only if the entries of $U$ satisfy the relations of 
   $S_{4}^{+}\wr_{\ast}\mathbb{Z}_{2}$. 
   Now we shall prove that the upper off diagonal block of $U$ vanishes 
   forcing the lower off diagonal block to vanish as well (using the antipode). 
   To that end, let us recall that for any composable pair $(e_{\alpha},e_{\beta})$ 
   coming from the red graph and $(e_{\alpha^{\prime}},e_{\beta^{\prime}})$ 
   coming from the blue graph such that 
   $\theta^{\prime}(e_{\alpha^{\prime}},e_{\beta^{\prime}})
   =
   (e_{\mu^{\prime}},e_{\nu^{\prime}})\in E\star E$ 
   and 
   $\theta(e_{\alpha},e_{\beta})
   =
   (e_{\alpha},e_{\beta})\in E\star E$ , 
   we have
   \begin{equation}
     \label{newrelexample}
        q_{t(e_{\alpha})t(e_{\mu^{\prime})}}
        q_{s(e_{\alpha})s(e_{\mu^{\prime}})}
        q_{s(e_{\beta})s(e_{\nu^{\prime}})}
        =
        q_{t(e_{\alpha})t(e_{\alpha^{\prime}})}
        q_{s(e_{\alpha})s(e_{\alpha^{\prime}})}
        q_{s(e_{\beta})s(e_{\beta^{\prime}})}.
   \end{equation}
   We start by showing 
   \begin{equation*}
      q_{16}=q_{17}=0.
   \end{equation*}
   Taking $(e_{\alpha^{\prime}},e_{\beta^{\prime}})=(f_{6},f_{5})$ 
   and  $(e_{\mu^{\prime}},e_{\nu^{\prime}})=(f_{8}, f_{7})$ 
   in Equation~\eqref{newrelexample}, 
   we get for any $(e_{i},e_{j})\in E\star E$,
   \begin{equation}
     \label{newnewrel2}
      q_{t(e_{i})8}q_{s(e_{i})7}q_{s(e_{j})8}
      =
      q_{t(e_{i})8}q_{s(e_{i})6}q_{s(e_{j})8}.
    \end{equation}
    Plugging in composable pairs from the red graph, we get the following relations:
    \begin{equation}
    \label{6may2:13}
    \begin{aligned}
        (e_{3},e_{4}) &: q_{28}q_{17}q_{28}=q_{28}q_{16}q_{28}\\
        (e_{3},e_{1}) &: q_{28}q_{17}q_{38}=q_{28}q_{16}q_{38}\\
        (e_{3},e_{12}) &:  q_{28}q_{17}q_{48}=q_{28}q_{16}q_{48}\\
        (e_{2},e_{4}) &:  q_{38}q_{17}q_{28}=q_{38}q_{16}q_{28}\\
        (e_{2},e_{1}) &: q_{38}q_{17}q_{38}=q_{38}q_{16}q_{38}\\
        (e_{2},e_{12}) &: q_{38}q_{17}q_{48}=q_{38}q_{16}q_{48}\\
        (e_{11},e_{4}) &: q_{48}q_{17}q_{28}=q_{48}q_{16}q_{28}\\
        (e_{11},e_{1}) &: q_{48}q_{17}q_{38}=q_{48}q_{16}q_{38}\\
        (e_{11},e_{12}) &: q_{48}q_{17}q_{48}=q_{48}q_{16}q_{48}.   
    \end{aligned}
    \end{equation}
    Clearly, for $k=1,2,\ldots,8$,
    \[ 
       q_{18}q_{16}q_{k8}=0=q_{18}q_{17}q_{k8}.
    \]
    For any $k=5,\ldots,8$,  by free wreath product condition 
    \begin{align*}
       &q_{28}q_{16}q_{k8}=0=q_{28}q_{17}q_{k8} \\
       &q_{38}q_{16}q_{k8}=0=q_{38}q_{17}q_{k8} \\
       &q_{48}q_{16}q_{k8}=0=q_{48}q_{17}q_{k8}
    \end{align*}
    By orthogonality,
    \begin{align*}
       &q_{28}q_{16}q_{18}=0=q_{28}q_{17}q_{18}\\
       &q_{38}q_{16}q_{18}=0=q_{38}q_{17}q_{18}\\
       &q_{48}q_{16}q_{18}=0=q_{48}q_{17}q_{18}
    \end{align*}
    Again by using wreath product construction, for $i=5,\ldots,8$ and $k=1,\ldots,8$, 
    \[
       q_{i8}q_{16}q_{k8}=q_{i8}q_{17}q_{k8}=0.
    \]
   Summing all we get, 
   \[
     (\sum_{i=1}^{8}q_{i8})q_{16}(\sum_{i=1}^{8}q_{i8})
     =
     (\sum_{i=1}^{8}q_{i8})q_{17}(\sum_{i=1}^{8}q_{i8}).
   \]
   Which shows $q_{16}=q_{17}$. Therefore, by orthogonality, $q_{16}=q_{17}=0$.
   Similarly, one can show that
   \begin{displaymath}
      q_{26}=q_{27}=q_{36}=q_{37}=q_{46}=q_{47}=0.
   \end{displaymath}
   Hence, for $k=1,2,3,4$ and $q_{ij}, q_{mn}$ in the upper off diagonal block, 
   \begin{displaymath}
      q_{k6}(q_{ij}-q_{mn})=0.
   \end{displaymath}
   For any $k=5,6,7,8$, by the free wreath product relations,
   \begin{displaymath}
      q_{k6}(q_{ij}-q_{mn})=0.
   \end{displaymath}
   Therefore, 
   $\sum_{k=1}^{8}q_{k6}(q_{ij}-q_{mn})=0$ which shows $q_{ij}=q_{mn}$. 
   In particular, by orthogonality, 
   $q_{15}=q_{18}=q_{25}=q_{28}=q_{35}=q_{38}=q_{45}=q_{48}=0$. 
   Consequently, the upper off diagonal block becomes a zero matrix. 
   Applying the antipode $\kappa$, the lower off diagonal also becomes a zero matrix. 
   Therefore using the same argument as used in the proof of Corollary~\ref{qisocriterion} , 
   we have
   \begin{displaymath}
       {\rm Aut}^{+}(\Gamma_{1},\Gamma_{2},\Theta)
       \cong 
       {\rm Aut}^{+}(\clg_{1},\clg_{2},\theta)
       \ast
       {\rm Aut}^{+}(\clg_{1}^{\prime},\clg_{2}^{\prime},\theta^{\prime})
       \cong 
       S_{4}^{+}.
   \end{displaymath}
    As the triples satisfy all the standing assumptions, 
    by Corollary~\ref{qisocriterion}, it follows that
    $(K_{4},K_{4},{\rm id})\nsim^{q} (K_{4},K_{4},\theta^{\prime})$
\end{proof}

 \begin{remark}
     The above example is non trivial in the sense that we have taken 
     $\clg_{l}=\clg_{l}^{\prime}=K_{4}$ and therefore all of them are trivially 
     (quantum) isomorphic. Only the choice of different bijections between 
     the composable pairs make the triples non quantum equivalent.
 \end{remark}
 
Now we are naturally led to take the same $\theta$ and compute the quantum 
automorphism group of disjoint union of the same copy of a  triple 
$(\clg_{1},\clg_{2},\theta)$. As such there is is no need to restrict 
ourselves to only two copies. To accommodate taking disjoint union of $n$-many 
copies of a triple, we introduce some notations.

 Let $\clg_{1},\clg_{2}$ be two finite, connected bidirected graphs 
 with commuting vertex matrices on the same vertex set $V$. 
 Denoting the edge sets by $E_{1}, E_{2}$ respectively, let $\theta$ 
 be the bijection between $E_{1}\star E_{2}$ and $E_{2}\star E_{1}$ 
 so that $(\clg_{1},\clg_{2},\theta)$ becomes a defining triple for a $2$-graph. 
 Then we construct the triple to be denoted by 
 $(\clg_{1}^{(n)},\clg_{2}^{(n)},\theta^{(n)})$ where $\clg_{l}^{(n)}$ 
 are taken to be the disjoint union of $n$-copies of the graphs $\clg_{l}$ 
 on the vertex set $\clv=\underbrace{V\sqcup V\sqcup\ldots\sqcup V}_{n-times}$ for $l=1,2$. 
 Then the composable pairs become 
 $\underbrace{(E_{1}\star E_{2})\sqcup\ldots\sqcup(E_{1}\star E_{2})}_{n-times}$. 
 The bijection $\theta^{(n)}$ is defined to be $\theta$ on each copy 
 of the disjoint union of composable pairs. We also denote the $k$-th 
 copy of the disjoint union of composable pairs by $E_{1(k)}\star E_{2(k)}$ 
 and the corresponding bijection by $\theta_{(k)}$. 
 We do not label the individual edges of the disjoint union as it would 
 make notations cumbersome. Instead, we shall explicitly mention from 
 which copy the edges are coming when needed. 
 For a CQG \(\mathbb{G}\), \(\mathbb{G}^{*n}\) will denote the \(n\)-fold 
 free product of \(\mathbb{G}\) with itself. With these notations we 
 have the following analogue of  \cite{bichon2004free}*{Theorem 4.2}.

\begin{theorem}    
    \begin{displaymath}
        {\rm Aut}^{+}(\clg_{1}^{(n)},\clg_{2}^{(n)},\theta^{(n)})
        =
        {\rm Aut}^{+}(\clg_{1},\clg_{2},\theta) \wr_{\ast} S_{n}^{+}.
    \end{displaymath} 
\end{theorem}

\begin{proof}
    Let $\clg_{1},\clg_{2}$ be finite connected bidirected graphs 
    with set of vertices $V=\{1, \ldots, m\}$. For $k \in\{1, \ldots, n\}$, 
    we denote by $\clg_{1(k)},\clg_{2(k)}$ the $k$-th copy of 
    $\clg_{1},\clg_{2}$ in $\clg_{1}^{(n)},\clg_{2}^{(n)}$ respectively, 
    with set of vertices $V_k=\{k(1), \ldots, k(m)\}$. 
    The generators of ${\rm Aut}^{+}(\clg_{1},\clg_{2},\theta)$ 
    are denoted by $\left(u_{i j}\right)_{1 \leq i, j \leq m}$, 
    and the generators of $S_{n}^{+}$ are  denoted by 
    $\left(x_{k l}\right)_{1 \leq k, l \leq n}$. Let $\mathcal{B}$ 
    be the free algebra with generators $X_{k(i) l(j)}, 1 \leq i, j \leq m$, 
    $1 \leq k, l \leq n$; let $((u_{ij}))_{i,j\in V}$ be the fundamental unitary 
    of ${\rm Aut}^{+}(\clg_{1},\clg_{2},\theta)$. 
    We define an algebra morphism
    \begin{align*}
	\Phi_0\colon &\mathcal{B} 
	\longrightarrow 
	{\rm Aut}^{+}(\clg_{1},\clg_{2},\theta)\wr_{\ast} S_{n}^{+}\\
	 &X_{k(i) l(j)} \longmapsto \nu_k\left(u_{i j}\right) x_{k l},
    \end{align*} 
    where 
    $\nu_{k}:{\rm Aut}^{+}(\clg_{1},\clg_{2},\theta)
    \rightarrow 
    {\rm Aut}^{+}(\clg_{1},\clg_{2},\theta)^{\ast n}$ 
    is the embedding of the $k$-th copy in the free product.
    Now we can easily verify that it factors through all the relations for 
    ${\rm Aut}^{+}(\clg_{1}^{(n)},\clg_{2}^{(n)},\theta^{(n)})$. 
    To that end, first we arrange $\Phi_{0}(X_{k(i)l(j)})$ in a block matrix 
    indexed by the set $\clv\times\clv$. We do this by labeling the $i$-th 
    vertex of the $k$-th copy of $V$ by $k(i)$. If the vertex matrices of 
    $\clg_{1},\clg_{2}$ are $\varepsilon_{1},\varepsilon_{2}$, then the vertex matrices for 
    $\clg_{1}^{(n)}, \clg_{2}^{(n)}$ are block diagonal matrices with the diagonal blocks being 
    $\varepsilon_{1}, \varepsilon_{2}$. Then it follows verbatim from the proof 
    of \cite{bichon2004free}*{Theorem 4.2} that $((\Phi_{0}(X_{k(i)l(j)})))$ 
    commutes with the vertex matrices of $\clg_{1}^{(n)}, \clg_{2}^{(n)} $. 
    As for the relations~\eqref{newrel}, by the definition of $\theta^{(n)}$ 
    we have to consider the following two cases: 

    \subsection*{Case I}
     Let $\theta^{(n)}(e_{\alpha},e_{\beta})=(e_{\mu},e_{\nu})$  and 
     $\theta^{(n)}(e_{\alpha^{\prime}},e_{\beta^{\prime}})
     =(e_{\mu^{\prime}},e_{\nu^{\prime}})$ 
     where 
     $(e_{\alpha},e_{\beta})$, $(e_{\alpha^{\prime}},e_{\beta^{\prime}}) \in E_{1(k)}\star E_{2(k)}$ 
     and 
     $(e_{\mu},e_{\nu}),(e_{\mu^{\prime}},e_{\nu^{\prime}}) \in E_{2(k)}\star E_{1(k)}$.
     Let 
     \begin{equation*}
       \begin{matrix}
        t(e_{\mu^{\prime}})=k(i^{\prime}), 
        &
        t(e_{\mu})=k(i), 
        &
        s(e_{\mu^{\prime}})=k(j^{\prime}), 
        &
        s(e_{\mu})=k(j), \\
        s(e_{\nu^{\prime}})=k(p^{\prime}), 
        &
        s(e_{\nu})=k(p), 
        &
        t(e_{\alpha^{\prime}})=k(q^{\prime}), 
        & 
        t(e_{\alpha})=k(q),\\
 	s(e_{\alpha^{\prime}})=k(r^{\prime}),
	&
	s(e_{\alpha})=k(r), 
	&
	s(e_{\beta^{\prime}})=k(t^{\prime}), 
	&
	s(e_{\beta})=k(t).
      \end{matrix}
    \end{equation*}
    Then, we calculate
      \begin{align*}
        \Phi_{0}(
        		&X_{t(e_{\mu^{\prime}})t(e_{\mu})} 
		X_{s(e_{\mu^{\prime}})s(e_{\mu})} 
		X_{s(e_{\nu^{\prime}})s(e_{\nu})}) \\
		&= 
		\Phi_{0}(
		X_{k(i^{\prime})k(i)} 
		X_{k(j^{\prime})k(j)} 
		X_{k(p^{\prime})k(p)}) \\
        		&= 
		\left(\nu_k\left(u_{i^{\prime}i}\right) x_{k k}\right) 
		\left( \nu_k\left(u_{j^{\prime}j}\right) x_{k k}\right) 
		\left( \nu_k\left(u_{p^{\prime} p}\right) x_{k k}\right) \\
        		&=
		 \bigl(\nu_{k}(u_{i^{\prime}i}u_{j^{\prime}j}u_{p^{\prime}p})\bigr)
		 x_{kk}x_{kk}x_{kk}.
    \end{align*}
   
    \begin{align*}
        \Phi_{0}(
        	   &X_{t(e_{\alpha^{\prime}})t(e_{\alpha})} 
	   X_{s(e_{\alpha^{\prime}})s(e_{\alpha})} 
	   X_{s(e_{\beta^{\prime}})s(e_{\beta})}) \\
	   &= 
	   \Phi_{0}(
	   X_{k(q^{\prime})k(q)} 
	   X_{k(r^{\prime})k(r)} 
	   X_{k(t^{\prime})k(t)}) \\
        	  &= 
	  \left(\nu_k\left(u_{q^{\prime}q}\right) x_{k k}\right) 
	  \left( \nu_k\left(u_{r^{\prime} r}\right) x_{k k}\right) 
	  \left( \nu_k\left(u_{t^{\prime} t}\right) x_{k k}\right) \\
          &= 
          \bigl(\nu_{k}(u_{q^{\prime}q}u_{r^{\prime}r}u_{t^{\prime} t})\bigr)
          x_{kk}x_{kk}x_{kk}.
    \end{align*}
    Since $\theta(e_{\alpha},e_{\beta})=(e_{\mu},e_{\nu})$, 
    $\theta(e_{\alpha^{\prime}},e_{\beta^{\prime}})
    =(e_{\mu^{\prime}},e_{\nu^{\prime}})$ 
    and $u_{ij}$ is the fundamental representation for 
    $Aut^+(\clg_{1},\clg_{2},\theta)$, we have
    \begin{equation}
     \label{uij-compat}
       	u_{t(e_{\mu^{\prime}})t(e_{\mu})} 
    	u_{s(e_{\mu^{\prime}})s(e_{\mu})} 
    	u_{s(e_{\nu^{\prime}})s(e_{\nu})}
   	 =
    	u_{t(e_{\alpha^{\prime}})t(e_{\alpha})} 
    	u_{s(e_{\alpha^{\prime}})s(e_{\alpha})}
    	u_{s(e_{\beta^{\prime}})s(e_{\beta})}
    \end{equation} 
    and therefore 
    $u_{i^{\prime}i}u_{j^{\prime}j}u_{p^{\prime}p}
    =u_{q^{\prime}q}u_{r^{\prime}r}u_{t^{\prime} t}$. 
    So, in this case,
    \begin{displaymath}
  	\Phi_{0}(X_{t(e_{\mu^{\prime}})t(e_{\mu})} 
  	X_{s(e_{\mu^{\prime}})s(e_{\mu})} 
  	X_{s(e_{\nu^{\prime}})s(e_{\nu})})
  	= 
  	\Phi_{0}(X_{t(e_{\alpha^{\prime}})t(e_{\alpha})} 
  	X_{s(e_{\alpha^{\prime}})s(e_{\alpha})} 
  	X_{s(e_{\beta^{\prime}})s(e_{\beta})}). 
    \end{displaymath}
    
    \subsection*{Case II} 
    Let $\theta^{(n)}(e_{\alpha},e_{\beta})=(e_{\mu},e_{\nu})$ 
    and $\theta^{(n)}(e_{\alpha^{\prime}},e_{\beta^{\prime}})
    =
    (e_{\mu^{\prime}},e_{\nu^{\prime}})$ 
    such that 
    $(e_{\alpha},e_{\beta})\in E_{1(k)}\star E_{2(k)}$, 
    $(e_{\alpha^{\prime}},e_{\beta^{\prime}}) \in E_{1(l)}\star E_{2(l)}$, 
    $(e_{\mu},e_{\nu})\in E_{2(k)}\star E_{1(k)}$ 
    and 
    $(e_{\mu^{\prime}},e_{\nu^{\prime}}) \in E_{2(l)}\star E_{1(l)}$ 
    where $k\ne l\in\{1,2,\cdots,n\}$. 
    Let
    \begin{equation*}
      \begin{matrix}
        t(e_{\mu^{\prime}})=l(i^{\prime}), 
        & 
        t(e_{\mu})=k(i), 
        & 
        s(e_{\mu^{\prime}})=l(j^{\prime}), 
        &
        s(e_{\mu})=k(j),\\
	s(e_{\nu^{\prime}})=l(p^{\prime}), 
	&
	s(e_{\nu})=k(p), 
	&
	t(e_{\alpha^{\prime}})=l(q^{\prime}), 
	&
	t(e_{\alpha})=k(q),\\
	s(e_{\alpha^{\prime}})=l(r^{\prime}), 
	&
	s(e_{\alpha})=k(r), 
	&
	s(e_{\beta^{\prime}})=l(t^{\prime}), 
	&
	s(e_{\beta})=k(t).
      \end{matrix}
    \end{equation*}
    And again we calculate       
    \begin{align*}
        \Phi_{0}(
        &X_{t(e_{\mu^{\prime}})t(e_{\mu})} 
        X_{s(e_{\mu^{\prime}})s(e_{\mu})} 
        X_{s(e_{\nu^{\prime}})s(e_{\nu})}) \\
        &= 
        \Phi_{0}(
        X_{l(i^{\prime})k(i)} 
        X_{l(j^{\prime})k(j)} 
        X_{l(p^{\prime})k(p)}) \\
        &= \left(\nu_l\left(u_{i^{\prime} i}\right) 
        x_{l k}\right) \left( \nu_l\left(u_{j^{\prime} j}\right) 
        x_{l k}\right) \left( \nu_l\left(u_{p^{\prime}p}\right) 
        x_{l k}\right) \\
        &= \left(\nu_{l}(u_{i^{\prime}i}u_{j^{\prime}j}u_{p^{\prime}p})\right)
        x_{lk}x_{lk}x_{lk}.  
    \end{align*}
    \begin{align*}
        \Phi_{0}(
        &X_{t(e_{\alpha^{\prime}})t(e_{\alpha})} 
        X_{s(e_{\alpha^{\prime}})s(e_{\alpha})} 
        X_{s(e_{\beta^{\prime}})s(e_{\beta})})\\
        &= 
        \Phi_{0}(
        X_{l(q^{\prime})k(q)} 
        X_{l(r^{\prime})k(r)} 
        X_{l(t^{\prime})k(t)}) \\
        &= \left(\nu_l\left(u_{q^{\prime} q}\right) 
        x_{l k}\right) \left( \nu_l\left(u_{r^{\prime} r}\right) 
        x_{l k}\right) \left( \nu_l\left(u_{t^{\prime} t}\right) 
        x_{l k}\right) \\
        &= \left(\nu_{l}(u_{q^{\prime}q}u_{r^{\prime}r}u_{t^{\prime} t})\right)
        x_{lk}x_{lk}x_{lk}.  
    \end{align*}
    It follows from equation~\eqref{uij-compat} that
    $u_{i^{\prime}i}u_{j^{\prime}j}u_{p^{\prime}p}
    =u_{q^{\prime}q}u_{r^{\prime}r}u_{t^{\prime} t}$. 
    Hence
    \begin{displaymath}
      \Phi_{0}(
      X_{t(e_{\mu^{\prime}})t(e_{\mu})} 
      X_{s(e_{\mu^{\prime}})s(e_{\mu})} 
      X_{s(e_{\nu^{\prime}})s(e_{\nu})})
      = 
      \Phi_{0}(
      X_{t(e_{\alpha^{\prime}})t(e_{\alpha})} 
      X_{s(e_{\alpha^{\prime}})s(e_{\alpha})} 
      X_{s(e_{\beta^{\prime}})s(e_{\beta})}). 
    \end{displaymath}
    The above calculations show that there is a well-defined $\ast$-homomorphism
    \begin{displaymath}
        \Phi:{\rm Aut}^{+}(\clg_{1}^{(n)},\clg_{2}^{(n)},\theta^{(n)})
        \longrightarrow 
        {\rm Aut}^{+}(\clg_{1},\clg_{2},\theta)\wr_{\ast}S^{+}_{n}.
    \end{displaymath}
    In order to construct the inverse of $\Phi$, 
    let $\mathcal{C}$ be the free $\ast$-algebra with generators $(u_{ij})_{1\leq i,j\leq m}$. 
    For $k\in \{1,2,\cdots,n\}$, define $\ast$-algebra homomorphism 
    \begin{displaymath}
  	\eta_{0}^{k}: \mathcal{C}
  	\longrightarrow 
  	{\rm Aut}^{+}(\clg_{1}^{(n)},\clg_{2}^{(n)},\theta^{(n)}), 
  	\quad 
  	u_{ij}\mapsto \sum_{l=1}^{n}X_{k(i)l(j)}, 
  	\quad 1\leq i,j \leq m.
    \end{displaymath}
    We show that this factors through the ideal generated by the relations of 
    ${\rm Aut}^{+}(\clg_{1},\clg_{2},\theta)$. 
    Again as before, the only new thing is to take care of is the 
    new relations~\eqref{newrel}. To that end, let 
    $\theta(e_{\alpha},e_{\beta})=(e_{\mu},e_{\nu})$ 
    and 
    $\theta(e_{\alpha^{\prime}},e_{\beta^{\prime}})=(e_{\mu^{\prime}},e_{\nu^{\prime}})$. 
    We shall show that for any $k\in \{1,2,\cdots,n\}$,
    \begin{align*}
 	\eta_{0}^{k}(&u_{t(e_{\mu^{\prime}})t(e_{\mu})})
 	\eta^{k}_{0} (u_{s(e_{\mu^{\prime}})s(e_{\mu})}) 
 	\eta^{k}_{0}(u_{s(e_{\nu^{\prime}})s(e_{\nu})})\\
 	&=
	\eta_{0}^{k}(u_{t(e_{\alpha^{\prime}})t(e_{\alpha})})
 	\eta^{k}_{0}( u_{s(e_{\alpha^{\prime}})s(e_{\alpha})})
 	\eta^{k}_{0}( u_{s(e_{\beta^{\prime}})s(e_{\beta})}).
    \end{align*}
    So let 
    \begin{equation*}
      \begin{matrix}
     	t(e_{\mu^{\prime}})=i^{\prime}, 
	&
	t(e_{\mu})=i, 
	&
	s(e_{\mu^{\prime}})=j^{\prime}, 
	&
	s(e_{\mu})=j,\\
     	s(e_{\nu^{\prime}})=p^{\prime}, 
	&
	s(e_{\nu})=p, 
	&
	t(e_{\alpha^{\prime}})=q^{\prime}, 
	&
	t(e_{\alpha})=q,\\
     	s(e_{\alpha^{\prime}})=r^{\prime}, 
	&
	s(e_{\alpha})=r, 
	&
	s(e_{\beta^{\prime}})=t^{\prime}, 
	&
	s(e_{\beta})=t.
      \end{matrix}
    \end{equation*} 
    Thus we have to show that 
    \begin{equation*}
  	\eta_{0}^{k}(u_{i^{\prime} i})
  	\eta_{0}^{k} (u_{j^{\prime} j})
  	\eta^{k}_{0} (u_{p^{\prime} p})
  	=
  	\eta_{0}^{k}(u_{q^{\prime} q})
  	\eta^{k}_{0} (u_{r^{\prime} r})
  	\eta^{k}_{0} (u_{t^{\prime} t}),    
    \end{equation*}
    i.e.
    \begin{equation}
      \label{09may9:54}
   	\sum_{l,l^{\prime},l^{\prime\prime}}^{n}
   	X_{k(i^{\prime})l(i)}X_{k(j^{\prime})l^{\prime}(j)}
   	X_{k(p^{\prime})l^{\prime\prime}(p)} 
   	= 
   	\sum_{l,l^{\prime},l^{\prime\prime}}^{n}
   	X_{k(q^{\prime})l(q)}
   	X_{k(r^{\prime})l^{\prime}(r)}
   	X_{k(t^{\prime})l^{\prime\prime}(t)}.   
    \end{equation}
    Consider the composable pairs $(e_{\alpha},e_{\beta})$ and 
    $(e_{\alpha^{\prime}},e_{\beta^{\prime}})$ from the $l$-th and $k$-th copy 
    of the disjoint union of composable pairs and take  
    $\theta_{(l)}(e_{\alpha},e_{\beta})=(e_{\mu},e_{\nu})$ and 
    $\theta_{(k)}(e_{\alpha^{\prime}},e_{\beta^{\prime}})
    =(e_{\mu^{\prime}},e_{\nu^{\prime}})$ 
    so that we can do the following indexing:
    \begin{equation*}
      \begin{matrix}
      	t(e_{\mu^{\prime}})=k(i^{\prime}), 
	&
	t(e_{\mu})=l(i), 
	&
	s(e_{\mu^{\prime}})=k(j^{\prime}), 
	&
	s(e_{\mu})=l(j),\\
      	s(e_{\nu^{\prime}})=k(p^{\prime}), 
	&
	s(e_{\nu})=l(p), 
	&
	t(e_{\alpha^{\prime}})=k(q^{\prime}), 
	&
	t(e_{\alpha})=l(q),\\
      	s(e_{\alpha^{\prime}})=k(r^{\prime}), 
	&
	s(e_{\alpha})=l(r), 
	&
	s(e_{\beta^{\prime}})=k(t^{\prime}), 
	&
	s(e_{\beta})=l(t).
      \end{matrix}
    \end{equation*} 
    Since $k(i^{\prime})=t(e_{\mu^{\prime}})$ and $k(j^{\prime})=s(e_{\mu^{\prime}})$ 
    and there is no edge between the vertices $l(i)$ and $l^{\prime}(j)$ for 
    $l\neq l^{\prime}$, $X_{k(i^{\prime})l(i)}X_{k(j^{\prime})l^{\prime}(j)}=0$. 
    Similarly, $X_{k(q^{\prime})l(q)}X_{k(r^{\prime})l^{\prime}(r)}=0$ for $l\neq l^{\prime}$. 
    Therefore, \eqref{09may9:54} reduces to  
    \begin{equation}
      \label{wr1}
     	\sum_{l,l^{\prime\prime}}^{n}
      	X_{k(i^{\prime})l(i)}
      	X_{k(j^{\prime})l(j)}
      	X_{k(p^{\prime})l^{\prime\prime}(p)} 
      	= 
      	\sum_{l,l^{\prime\prime}}^{n}
      	X_{k(q^{\prime})l(q)}
      	X_{k(r^{\prime})l(r)}
      	X_{k(t^{\prime})l^{\prime\prime}(t)}.
    \end{equation}
    Now $k(j^{\prime})=s(e_{\mu^{\prime}})=t(e_{\nu^{\prime}})$ and  
    $k(p^{\prime})=s(e_{\nu^{\prime}})$ which shows that there is an edge between 
    $k(j^{\prime})$ and $k(p^{\prime})$. Similarly 
    $k(r^{\prime})=s(e_{\alpha^{\prime}})=t(e_{\beta^{\prime}})$ and 
    $k(t^{\prime})=s(e_{\beta^{\prime}})$. Hence there is an edge between 
    $k(r^{\prime})$ and $k(t^{\prime})$. Therefore, using the property of generators, 
    the L.H.S. of Equation~\eqref{wr1} becomes 
     \( \sum_{l=1}^{n}X_{k(i^{\prime})l(i)}X_{k(j^{\prime})l(j)}X_{k(p^{\prime})l(p)}\)
    and the R.H.S. becomes
    \(\sum_{l=1}^{n}X_{k(q^{\prime})l(q)}X_{k(r^{\prime})l(r)}X_{k(t^{\prime})l(t)}\).
    Now $\theta_{(k)}(e_{\alpha},e_{\beta})=(e_{\mu},e_{\nu})$ and 
    $\theta_{(l)}(e_{\alpha^{\prime}},e_{\beta^{\prime}})=(e_{\mu^{\prime}},e_{\nu^{\prime}})$ imply,
    \begin{displaymath}
    	X_{k(i^{\prime})l(i)}
    	X_{k(j^{\prime})l(j)}
    	X_{k(p^{\prime})l(p)}
    	=
    	X_{k(q^{\prime})l(q)}
    	X_{k(r^{\prime})l(r)}
    	X_{k(t^{\prime})l(t)}.
    \end{displaymath} 
    Therefore, we finally get 
    \begin{displaymath}
    	\eta_{0}^{k}(
    	u_{i^{\prime} i} 
    	u_{j^{\prime} j} 
    	u_{p^{\prime} p})
    	=
    	\eta_{0}^{k}(
    	u_{q^{\prime} q} 
    	u_{r^{\prime} r} 
    	u_{t^{\prime} t}).
    \end{displaymath}
     Hence it induces a *-homomorphism 
     \begin{displaymath}
 	\eta^{k}: {\rm Aut}^{+}(\clg_{1},\clg_{2},\theta) 
 	\longrightarrow 
 	{\rm Aut}^{+}(\clg_{1}^{(n)},\clg_{2}^{(n)},\theta^{(n)})
 	\quad
 	\text{ for all } k=1,...,n.
    \end{displaymath} 
    We also have *-homomorphism  
    $\pi: S_{n}^{+}\longrightarrow {\rm Aut}^{+}(\clg_{1}^{(n)},\clg_{2}^{(n)},\theta^{(n)})$ 
    such that for any $k,l\in\{1,,2,\cdots,n\}$ (see~\cite{bichon2004free}),
    \begin{displaymath}
    	\pi(x_{kl})=\sum_{r=1}^m X_{k(r)l(i)} 
    	\quad 
    	\text{ for all }  i\in \{1,2,\cdots,m\}.
    \end{displaymath} 
    Using the universal property of the free product we get *-homomorphism 
    $\Psi_{0}: {\rm Aut}^{+}(\clg_{1},\clg_{2},\theta)^{*n}* S_{n}^{+}
    \longrightarrow 
    {\rm Aut}^{+}(\clg_{1}^{(n)},\clg_{2}^{(n)},\theta^{(n)})$ 
    such that $\Psi_{0}\circ \nu_{k}=\eta^k$, where 
    $\nu_{k}:{\rm Aut}^{+}(\clg_{1},\clg_{2},\theta)
    \rightarrow 
    {\rm Aut}^{+}(\clg_{1},\clg_{2},\theta)^{*n}* S_{n}^{+}$ is the natural embedding. 
    Also for $k,l\in \{1,2,\cdots,n\}$ and $i,j\in \{1,2,\cdots,m\}$, 
    we have along the lines of \cite{bichon2004free}, 
    \begin{displaymath}
     	 \Psi_{0}(\nu_{k}(u_{ij})x_{kl})=\Psi_{0}(x_{kl}\nu_{k}(u_{ij})).
    \end{displaymath}
    Therefore, finally by the universal property of the free wreath product, 
    we get a $\ast$-homomorphism 
    $\Psi: {\rm Aut}^{+}(\clg_{1},\clg_{2},\theta)\wr_{\ast} S^{+}_{n}
    \longrightarrow 
    {\rm Aut}^{+}(\clg_{1}^{(n)},\clg_{2}^{(n)},\theta^{(n)})$. 
    Now it is easy to check that $\Phi$ and $\Psi$ are inverses of each other, 
    completing the proof. 
\end{proof}

\section*{Concluding remarks}
    \begin{itemize}
	\item The $2$-graph in the example (b) of Section~\(4\) can be seen to be 
		isomorphic to the cartesian product of the $1$-graph 
		with two vertices and a bidirected edge (say $\mathcal{G}$) 
		with itself in the sense of Proposition 1.8 of \cite{NYJM}. 
		In that example, the quantum automorphism group of the $2$-graph 
		is ${\rm Aut}^{+}(\mathcal{G})\ot{\rm Aut}^{+}(\mathcal{G})$. 
		We conjecture that if $\mathcal{G}_{1},\mathcal{G}_{2}$ are 
		two $1$-graphs, then the quantum automorphism group of the 
		$2$-graph $\mathcal{G}_{1}\times\mathcal{G}_{2}$ at least contains 
		${\rm Aut}^{+}(\mathcal{G}_{1})\ot{\rm Aut}^{+}(\mathcal{G}_{2})$ 
		as a quantum subgroup.
	\item One would like to find a `non-trivial' example of a pair of $2$-graphs 
		(or equivalently triples) that are not classically isomorphic, 
		but quantum isomorphic. We are working in this direction and we hope 
		to report it in a forthcoming article soon. To produce a ``trivial" example, 
		one can simply take a pair of quantum isomorphic graphs which are not 
		classicaly isomorphic say $\clg_{1},\clg_{2}$. 
		Then the triples $(\clg_{1}, \clg_{1}, {\rm id})$ and $(\clg_{2}, \clg_{2}, {\rm id})$ 
		will be quantum equivalent but not classicaly equivalent.
	\item The quantum automorphism group of a $2$-graph should naturally 
		act on the corresponding higher rank graph $C^{\ast}$-algebra as it 
		has already been observed in the literature in the case 
		of $1$-graphs (see \cites{Joardar1,Joardar2,Weber} for example).
	\item We believe that the notion of quantum isomorphism between $2$-graphs 
		given in the Definition \ref{qiso} can be linked to a perfect quantum strategy 
		for winning a suitable non-local ``$2$-graph isomorphism game''. 
    \end{itemize}

\appendix

\section{}

\begin{theorem}
  \label{appendix}
    Two defining triples 
    $(\mathcal{G}_1,\mathcal{H}_1,\theta_1)$ 
    and 
    $(\mathcal{G}_2,\mathcal{H}_2, \theta_2)$ are equivalent 
    if and only if the corresponding 2-graphs 
    (say $\Lambda_{1}$ and $\Lambda_2$ respectively) are isomorphic.
\end{theorem}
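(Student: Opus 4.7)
The plan is to prove both implications by leveraging the fact (recalled in Section~2.1, following Section~6 of \cite{NYJM}) that a $2$-graph is completely determined by its degree-zero, degree-$(1,0)$, and degree-$(0,1)$ morphisms together with the factorization bijection between composable pairs; higher-degree morphisms are then obtained from these data via iterated applications of $\theta$.

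\textbf{($\Leftarrow$): Equivalent triples yield isomorphic $2$-graphs.} Suppose $T:V_1\to V_2$ realizes the equivalence. I first package $T$ into a candidate isomorphism $\Phi:\Lambda_1\to\Lambda_2$ of categories. On objects set $\Phi=T$. Condition (ii) together with the no-multiple-edge assumption says that the $(i,j)$-entries of $A_1$ and $A_2$ (resp.\ $B_1$ and $B_2$) are zero-one matrices with $A_1 T=T A_2$, so $T$ induces bijections $\Phi^E:E_1\to E_2$ and $\Phi^F:F_1\to F_2$ with $T\circ s=s\circ\Phi^{E}$, $T\circ r=r\circ\Phi^{E}$ (and likewise for $\Phi^F$). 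Extend $\Phi$ to all degrees by the factorization property: any $\lambda\in\Lambda_1^{(m,n)}$ has a presentation as a concatenation of degree-$(1,0)$ and degree-$(0,1)$ morphisms, and I define $\Phi(\lambda)$ by applying $\Phi^E,\Phi^F$ edge-by-edge and composing the results in $\Lambda_2$. The main substantive step is that this assignment is well-defined: different factorizations of the same $\lambda$ differ by iterated applications of $\theta_1$, and I must show $\Phi$ intertwines $\theta_1$ with $\theta_2$. This is exactly what condition (iii) ($\theta$-compatibility) supplies after identifying the bijections $\Phi^E,\Phi^F$ with the restriction of $T^{\otimes 2}$ to $E_i$ and $F_i$. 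Functoriality (preservation of source, range, composition, and identities) follows directly from (ii), and $\Phi$ is a bijection on every $\Lambda^{(m,n)}$ since $T$ is.

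\textbf{($\Rightarrow$): Isomorphic $2$-graphs yield equivalent triples.} Conversely, given a $2$-graph isomorphism $\Phi:\Lambda_1\to\Lambda_2$ (necessarily degree-preserving, since $d$ is a functor detecting degrees), restrict to $\Lambda_i^{(0,0)}$ to obtain $T:V_1\to V_2$, and to $\Lambda_i^{(1,0)}$, $\Lambda_i^{(0,1)}$ to obtain bijections $E_1\to E_2$, $F_1\to F_2$ intertwining source and range with $T$. Since the edge sets have no multiple edges and are determined by the vertex matrices, the intertwining at the level of $s$ and $r$ is equivalent to the matrix identities $TA_2=A_1T$, $TB_2=B_1T$, giving (i) and (ii). For (iii), recall that $\theta_i$ is recovered from $\Lambda_i^{(1,1)}$ via the unique factorization: $\theta_i(e,f)=(f',e')$ iff the composites $ef$ and $f'e'$ coincide in $\Lambda_i^{(1,1)}$. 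Applying $\Phi$ (which preserves composition) to this identity and unraveling the definitions of $\theta_{i\ast}$ in terms of characteristic functions yields precisely the commuting square in Definition~\ref{equivalent_triple}(iii).

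\textbf{Main obstacle.} The genuine work lies in the forward direction, specifically in showing that the edge-by-edge assignment $\Phi$ is well-defined on higher-degree morphisms. A morphism of degree $(m,n)$ with $m,n\geq 1$ admits many factorizations as an alternating word of $(1,0)$- and $(0,1)$-edges, all related by iterated rewriting via $\theta_1$; one must show these all map to the same morphism of $\Lambda_2$. Verifying this amounts to showing that $\theta$-compatibility propagates from the single application at degree $(1,1)$ to arbitrary iterated applications, which is a direct induction on $(m,n)$ using condition (iii) at each elementary commutation. Once this coherence is settled, the rest of both directions is routine bookkeeping.
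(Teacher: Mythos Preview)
Your proposal is correct and follows essentially the same route as the paper's proof: build $\Phi$ on objects via $T$, on degree-$(1,0)$ and $(0,1)$ morphisms via the intertwining of vertex matrices, check well-definedness at degree $(1,1)$ using $\theta$-compatibility, and extend inductively; for the converse, restrict the given isomorphism to degrees $(0,0)$, $(1,0)$, $(0,1)$, $(1,1)$ to read off $T$, the matrix identities, and the $\theta$-compatibility. One minor slip: your arrow labels are swapped---``equivalent triples yield isomorphic $2$-graphs'' is the $(\Rightarrow)$ direction of the stated biconditional, not $(\Leftarrow)$.
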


\begin{proof}
     This theorem must be well-known and the proof being very natural, 
     is only sketched here. Suppose~\((\mathcal{G}_1,\mathcal{H}_1,\theta_1)\) 
     and~\((\mathcal{G}_2,\mathcal{H}_2, \theta_2)\) are equivalent. 
     Therefore we may essentially assume that both 
     $(\mathcal{G}_{l},\mathcal{H}_{l},\theta_{l})$ for $l=1,2$ 
     share the common vertex set $V$; a bijection 
     $T:V\raro V$; $\mathcal{G}_{l}, \mathcal{H}_{l}$ 
     have adjacency matrices $A_{l}, B_{l}$ respectively for $l=1,2$ 
     such that
    \begin{displaymath}
        TA_{2}=A_{1}T; \ TB_{2}=B_{1}T.
    \end{displaymath}
    Let's construct a functor $\Phi$ from $\Lambda_{1}$ to $\Lambda_{2}$. 
    We recall that ${\rm obj}(\Lambda_l)$ are identified with the common vertex set $V$. 
    We assign to each $i\in V$ the element $T(i)\in V$. 
    So that we associate each object of $\Lambda_1$ to an object in $\Lambda_2$. 
    Now we want to associate each morphism in $\Lambda_{1}^{(m,n)}$ to a 
    morphism in $\Lambda_{2}^{(m,n)}$ for all $(m,n)\in\mathbb{N}^2$. 
    For $(m,n)=(1,0)$, a morphism $f$ from an object $i$ to an object $j$ 
    in $\Lambda_{1}$ is an edge (say $f$) in the graph $\mathcal{G}_{1}$ 
    with $s(f)=i$ and $t(f)=j$. Since $TA_{2}=A_{1}T$, 
    there is an edge say $\Phi(f)$  with $s(\Phi(f))=T(i)$ and $t(\Phi(f))=T(j)$. 
    Similarly one gets an edge $\Phi(g)$  with $s(\Phi(g))=T(i)$ and $t(\Phi(g))=T(j)$ 
    for a morphism $g$ of degree $(0,1)$ with $s(g)=i$ and $t(g)=j$. 
    Now let $F$ be a morphism of degree $(1,1)$ with $s(F)=i$ and $t(F)=j$. 
    By the unique factorization property, there are unique morphisms $f_{1},f_{2}$ 
    of degree $(1,0)$ and $g_{1},g_{2}$ of degree $(0,1)$ such that 
    $F=f_{1}\circ g_{1}=g_{2}\circ f_{2}$. It is clear that two morphisms 
    $f,g$ of degree $(1,0)$ or $(0,1)$ are composable iff 
    $\Phi(f)$ and $\Phi(g)$ are composable. Then we define $\Phi(F)$ 
    to be either of $\Phi(f_{1})\circ\Phi(g_{1})$ or $\Phi(g_2)\circ\Phi(f_2)$. 
    Using the $\theta$-compatibility and the fact that 
    $\theta_{1}(f_{1},g_{1})=(g_{2},f_{2})$ we get that 
    $\theta_{2}(\Phi(f_1),\Phi(g_{1}))=(\Phi(g_{2}),\Phi(f_{2}))$. 
    This proves the well-definedness of $\Phi(F)$. 
    Now, for any $(m,n)\in\mathbb{N}^{2}$, step by step one can 
    define $\Phi(F)\in \Lambda_{2}^{(m,n)}$ for $F\in\Lambda_{1}^{(m,n)}$. 
    One can easily verify that $\Phi$ is a functor. 
    Similarly, one can define a functor $\Phi^{-1}$ from $\Lambda_{2}$ to $\Lambda_{1}$ 
    using the map $T^{-1}$ on the vertex set $V$ (i.e. on the level of objects) 
    and it can be easily verified that $\Phi^{-1}$ is the inverse of the functor $\Phi$. 
    Thus an isomorphism between the categories 
    $\Lambda_1$ and $\Lambda_2$ is established.
    
    For the converse direction, we start with two isomorphic $2$-graphs 
    $\Lambda_1$ and $\Lambda_2$ with defining triples 
    $(\mathcal{G}_{1},\mathcal{H}_{1},\theta_{1})$ 
    and 
    $(\mathcal{G}_{2},\mathcal{H}_{2},\theta_{2})$ respectively. 
    Using the isomorphism of $\Lambda_1\) and \( \Lambda_2$, 
    one can see that there is a bijection between ${\rm obj}(\Lambda_1)$ 
    and ${\rm obj}(\Lambda_2)$ which are finite sets by our assumption. 
    So, without loss of generality, one can assume that $\mathcal{G}_{l},\mathcal{H}_{l}$ 
    share the same vertex set $V$. We denote the functor from 
    $\Lambda_{1}$ to $\Lambda_{2}$ by $\Phi$. 
    Then there is a morphism of degree $(1,0)$ from object $v_{i}$ to $v_{j}$ 
    i.e. there is an  edge of the graph $\mathcal{G}_{1}$ say $f$ 
    from the vertex $v_{i}$ to the vertex $v_{j}$ $\Rightarrow$ 
    there is a unique morphism $\Phi(f)$ from $\Phi(v_{i})$ to $\Phi(v_{j})$ 
    i.e. there is an edge from the vertex $\Phi(v_{i})$ to the vertex $\Phi(v_{j})$. 
    Equivalently if the permutation matrix corresponding to the 
    bijection $\Phi$ is denoted by $T$, $TA_{2}=A_{1}T$ where $A_{l}$ are 
    the vertex matrices of $\mathcal{G}_{l}$ for $l=1,2$. 
    Similarly considering degree $(0,1)$ morphisms one gets $TB_{2}=B_{1}T$. 
    Let $F$ be a morphism of degree $(1,1)$ in $\Lambda_1$ 
    with unique factorization $f_{1}\circ g_{1}=g_{1}^{\prime}\circ f_{1}^{\prime}$. 
    Then, as usual, identifying morphisms of degree $(1,0)$ and $(0,1)$ in 
    $\Lambda_{1}$ with edges of graphs $\mathcal{G}_{1}, \mathcal{H}_{1}$ respectively, 
    we get $\theta_{1}(f_{1},g_{1})=(g_{1}^{\prime},f_{1}^{\prime})$. 
    But by the definition of functor, 
    $\Phi(F)=\Phi(f_1)\circ\Phi(g_1)=\Phi(g_{1}^{\prime})\circ\Phi(f_{1}^{\prime})$. 
    Therefore from the unique factorization of $\Phi(F)$ in $\Lambda_2$, 
    we obtain that 
    $\theta_2(\Phi(f_{1}),\Phi(g_{1}))=(\Phi(g_{1}^{\prime}),\Phi(f_{1}^{\prime}))$ 
    so that we get the required $\theta$ compatibility to ensure that the triples 
    $(\mathcal{G}_1,\mathcal{H}_1,\theta_1)$ and 
    $(\mathcal{G}_2,\mathcal{H}_2,\theta_2)$ are equivalent.
\end{proof}

{\bf Acknowledgement.}
The first author is supported by SERB MATRICS grant no. MTR/2022/000515 (Govt. of India). The second author is partially supported by  JC Bose Fellowship and grant of Prof Debashish Goswami,  ISI Kolkata,  awarded
to him by SERB, DST (Govt of India). The authors would like to thank Dr. Arnab Mandal for some useful discussions. The authors would also like to thank the anonymous referee for their useful comments.

\begin{bibdiv}
   \begin{biblist}
    \bib{Qinfo2}{article}{
   author={Asterias A.},
   author={M\v{a}ncinska L.},
   author={Roberson David E.},
   author={\v{S}ámal R.},
   author={Severini S.},
   author={Varvitsiotis A},
   title={Quantum and non-signalling graph isomorphisms}, journal={J. Combin. Theory Ser. B}, 
   volume={136}, 
   date={2019}, 
   pages={289-328}
   }
     
    \bib{Banica}{article}{
   author={Banica, T.},
   title={Quantum automorphism groups of small metric spaces}, journal={Pacific J. Math}, 
   volume={219(1)}, 
   date={2005}, 
   pages={27-51}
   }
   \bib{Skalski}{article}{
   author={Banica T.},
   author={Skalski A.},
   title={Quantum symmetry groups of $C^{\ast}$-algebras equipped with an orthoginal filtration}, 
   journal={Proc. London Math. Soc.}, volume={106}, date={2013}, pages={980-1004}
   }
   \bib{DeCom}{article}{
   author={Baum Paul F.},
   author={De Commer, K.},
   author={ Hajac, Piotr M.}, 
   title={Free actions of compact quantum groups on unital $C^{\ast}$-algebras},
   journal={Documenta Math.},
   volume={22}, 
   date={2017}, 
   pages={825-849}
   }
   \bib{Joardar2}{article}{
   author={Bhattacharjee, S.},
   author={Joardar, S.},
   title={Equivariant $C^{\ast}$-correspondence and compact quantum group actions on Pimsner algebras}, 
   journal={https://doi.org/10.48550/arXiv.2209.0470}
   }
   \bib{Bichon}{article}{
   author={Bichon, J.},
   title={Quantum automorphism groups of finite graphs}, 
   journal={Proc. Am. Math. Soc.}, 
   volume={131(3)}, 
   date={2003}, 
   pages={665-673}
   }
    \bib{bichon2004free}{article}{
   author={Bichon, J.},
   title={Free wreath product by the quantum permutation group}, 
   journal={Algebr. Represent. Theory}, 
   volume={7}, 
   date={2004, no 4}, 
   pages={343-362}
   }
   \bib{Fulton}{article}{
   author={Fulton},
   title={The quantum automorphism group and directed trees},
   journal={Ph. D. thesis, Virginia}, 
   date={2006}
   } 
   \bib{Hajac}{article}{
   author={Gardella, E.},
   author={Hajac, Piotr M.}, 
   author={Tobolski, M.}, 
   author={Wu, J.},   
   title={The local triviality dimensions of actions of compact quantum groups}, 
   journal={https://doi.org/10.48550/arXiv.1801.00767}
   } 
    \bib{Goswami}{article}{
   author={Goswami, D.},
   title={Quantum group of isometries in classical and noncommutative geometry}, 
   journal={Comm. Math. Phys.}, 
   volume={285(1)}, 
   date={2009}, 
   pages={141-160}
   }
   \bib{Asfaq}{article}{
   author={Goswami, D.},
   author={Asfaq Hossain, Sk.},   
   title={Quantum symmetry in multigraphs}, 
   journal={https://doi.org/10.48550/arXiv.2302.08726}
   }
   \bib{Potts}{article}{
   author={Goswami, D.},
   author={Asfaq Hossain, Sk.},   
   title={Quantum symmetry on Potts Model}, 
   journal={J. Math. Phys.},
   volume={63, no 4},
   date={2022},
   pages={14 pages}}
   \bib{Hazzlewood1}{article}{
   author={Hazzlewood, R.},
   title={Constructing $k$-graphs from $k$-coloured graphs}, 
   journal={Unpublished Honours thesis}
   }
   \bib{Sims}{article}{
   author={Hazzlewood, R.},
   author={Raeburn, I.}, 
   author={Sims, A.}, 
   author={Webster, S.B.G.},    
   title={Remarks on some fundamental results about higher rank graphs and their $C^{\ast}$-algebras}, 
   journal={Proc. of the Edin. Math. Soc. (2)}, volume={56}, date={2013},pages={575-597}
   }
   \bib{edin}{article}{
    author={Joardar, S.}, 
    author={Mandal, A.},
    title={Invariance of KMS states on graph $C^{\ast}$-algebras under classical and quantum symmetry}, 
    journal={Proc. of the Edin. Math. Soc. (2)}, 
    volume={64}, 
    date={2021}, 
    pages={762-778}
    }
    \bib{Joardar1}{article}{
    author={Joardar, S.},
    author={Mandal, A.},
    title={Quantum symmetry of graph $C^{\ast}$-algebras at critical inverse temperature}, 
    journal={Studia Mathematica},
    year={2018},
    volume={256},
    pages={1-20}
    }
   \bib{Qinfo}{article}{
    author={Lupini, M.},
    author={Mancinska, L.},
    author={Robertson, David E.},
    title={Nonlocal games and quantum permutation groups}, 
    journal={J. Funct. Anal.},
    year={2020},
    volume={279, no 5},
    pages={44 pages}
    }
   \bib{Sims2}{article}{
   author={Pask, D.},
   author={Raeburn, I.},
   author={Rordam, M.},
   author={Sims, A.},   
   title={Rank two graphs whose $C^{\ast}$-algebras are direct limits of circle algebras}, 
   journal={J. Funct. Anal.}, 
   volume={239}, 
   date={2006, no 1}, 
   pages={137-178}
   }
   \bib{NYJM}{article}{
    author={Pask, D.},
    author={Kumjian, A.},
    title={Higher rank graph $C^{\ast}$-algebras}, 
    journal={New York J. Math.}, 
    volume={6}, 
    date={2020}, 
    pages={1-20}
    }
    \bib{Sims1}{article}{
        author={Raeburn, I.},
        author={Sims, A.},
    author={Yeend, T.},
       title={Higher rank graphs and their $C^{\ast}$-algebras}, 
    journal={Proc. Edin. Math. Soc. (2)}, 
    volume={46}, 
    date={2003},
    number={1}, 
    pages={99-115}
    }
    \bib{Steger1}{article}{
    author={Robertson, G.},
    author={Steger, T.},
        title={Affine buildings, tiling systems and higher rank Cuntz-Krieger algebras}, 
    journal={J. Reine Angew}, 
    volume={Math. 513}, 
    date={1999}, 
    pages={115-144}
    }
    \bib{Steger2}{article}{
    author={Robertson, G.},
    author={Steger, T.},    
    title={$K$-theory for rank $2$ Cuntz-Kriger algebras}, 
    journal={Preprint}
    }
    \bib{Vaes}{article}{
    author={Rollier, L.},
    author={Vaes, S.},
        title={Quantum automorphism groups of connected locally finite graphs and quantizations of discrete groups}, 
    journal = {International Mathematics Research Notices},
     year = {2023},
    pages = {rnad099},
    issn = {1073-7928},
    }
    \bib{Schmidt}{article}{
    author={Schmidt, S.},
    title={Quantum automorphism groups of finite graphs},
    journal={Thesis}
    }
    \bib{Weber}{article}{
            author={Schmidt, S.},
    author={Weber, M.},
        title={Quantum symmetries of graph $C^{\ast}$-algebras}, 
    journal={Canad. Math. Bull.}, 
    volume={61}, 
    date={2017}, 
    pages={848-864}
    }
    \bib{Soltan}{article}{
    author={M. Soltan, Piotr},
    title={On actions of compact quantum groups}, 
    journal={Illinois J. Math.}, 
    volume={55}, 
    date={2011}, 
    pages={953-962}
    }
    
   \bib{Voigt2}{article}{
    author={Voigt, C.},
    title = {Infinite quantum permutations},
    journal = {Advances in Mathematics},
    volume = {415},
    year = {2023},
    pages = {108887}
    } 

    \bib{voigtbc}{article}{
    author={Voigt, C.},
    title={The Baum-Connes conjecture for free orthogonal quantum groups},
    journal={Adv. Math.},
    volume={227},
    date={2011},
    number={5},
    pages={1873--1913}
    }                 
    \bib{Wang}{article}{
		author={Wang, S.},
		title={Quantum symmetry groups of finite spaces},
		journal={Comm. Math. Phys.},
		volume={195},
		date={1998},
		number={1},
		pages={195--211},
		issn={0010-3616},
		review={\MR{1637425}},
		doi={10.1007/s002200050385},
	 }
  
       	\bib{freewang}{article}{
		author={Wang, S.},
		title={Free products of compact quantum groups},
		journal={Comm. Math. Phys.},
		volume={167},
		date={1995},
		number={3},
		pages={671-692},
  }
	 \bib{Woro}{article}{
		author={Woronowicz, S.L.},
		title={Compact matrix pseudogroups},
		journal={Comm. Math. Phys.},
		volume={111},
		date={1987},
		number={4},
		pages={613--665},
		issn={0010-3616}
	 }
  
     \end{biblist}
  \end{bibdiv}
\end{document}